\newtheorem{theorem}{Theorem}[section]
\newtheorem{corollary}[theorem] {Corollary}
\newtheorem{definition}[theorem]{Definition}
\newtheorem{lemma}[theorem]{Lemma}
\newtheorem{problem}[theorem]{Problem}
\begin{document}
\title{Directed Metric Dimension of\\Oriented Graphs with Cyclic Covering}
\author{Sigit Pancahayani and Rinovia Simanjuntak\\
\vspace{-1mm}
\normalsize Combinatorial Mathematics Research Group\\
\vspace{-1mm}
\normalsize Faculty of Mathematics and Natural Sciences\\
\vspace{-1mm}
\normalsize Institut Teknologi Bandung, Bandung 40132, Indonesia\\
\vspace{2mm}\scriptsize {\small {\bf e-mail}: {\tt spancahayani@gmail.com, rino@math.itb.ac.id}}}

\date{}

\maketitle

\begin{abstract}
Let $D$ be a strongly connected oriented graph with vertex-set $V$ and arc-set $A$. The distance from a vertex $u$ to another vertex $v$, $d(u,v)$ is the minimum length of oriented paths from $u$ to $v$. Suppose $B=\{b_1,b_2,b_3,...b_k\}$ is a nonempty ordered subset of $V$. The representation of a vertex $v$ with respect to $B$, $r(v|B)$, is defined as a vector $(d(v,b_1), d(v,b_2), ..., d(v,b_k))$. If any two distinct vertices $u,v$ satisfy $r(u|B)\neq r(v|B)$, then $B$ is said to be a resolving set of $D$. If the cardinality of $B$ is minimum then $B$ is said to be a basis of $D$ and the cardinality of $B$ is called the directed metric dimension of $D$.

Let $G$ be the underlying graph of $D$ admitting a $C_n$-covering. A $C_n$-simple orientation is an orientation on $G$ such that every $C_n$ in $D$ is strongly connected. This paper deals with metric dimensions of oriented wheels, oriented fans, and amalgamation of oriented cycles, all of which admitting $C_n$-simple orientations.
\end{abstract}
%\hrulefill

\noindent \textbf{Keywords}: directed metric dimension, oriented graphs, oriented wheels, oriented fans, amalgamation of oriented cycles, simple-$C_n$ orientation.

\section{Introduction}

\noindent Let $D=(V,A)$ be a strongly connected oriented graph with vertex-set $V$ and arc-set $A$. For $u,v \in V(D)$, the \emph{distance from $u$ to $v$}, $d(u,v)$, is the minimum length of oriented paths from $u$ to $v$. Suppose $B=\{b_1,b_2,b_3,...b_k\}$ is a nonempty ordered subset of $V$. The \emph{representation of a vertex $v$ with respect to $B$}, $r(v|B)$, is defined as a vector $(d(v,b_1), d(v,b_2), ..., d(v,b_k))$. If any two distinct vertices $u,v$ satisfy $r(u|B)\neq r(v|B)$, then $B$ is said to be a \emph{resolving set of $D$}. If the cardinality of $B$ is minimum, then $B$ is said to be a \emph{basis} of $D$ and the cardinality of $B$ is called the \emph{directed metric dimension} of $D$, $dim(D)$. This notion was introduced by Chartrand, Raines and Zhang \cite{CRZ00} as an analog to the undirected version of metric dimension introduced by Slater \cite{Sla75} and Harary and Melter \cite{HM76}. It is obvious that not every oriented graph has directed dimension, however necessary and sufficient conditions for the dimension of an oriented graph to be defined are still unknown.

\noindent Unlike the undirected version, not many results have been known on directed metric dimension. Characterization of graphs with particular directed metric dimension is only known for one-dimensional oriented graph.
\begin{theorem} \emph{\cite{CRZ00}}
Let $D$ be a nontrivial oriented graph of order $n$. Then, $dim(D)=1$ if and only if there exists a vertex $v$ in $D$ such that
\begin{description}
  \item{(i)} $D$ admits an oriented Hamiltonian path $P$ with a terminal vertex $v$ such that $id(v)=1$ and
  \item{(ii)} if $P$ in (i) is in the form $v_{n-1},v_{n-2},...,v_1,v,$ then for each pair $i,j$ of integer with $1\leq i<j\leq n-1,$ the oriented graph $D-E(P)$ does not admit the arc in the form $(v_j,v_i)$.
\end{description}
\label{dim1}
\end{theorem}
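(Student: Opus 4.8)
The plan is to reduce the statement to a clean condition on the distance function and then prove each implication around it. The key preliminary observation is that if $B=\{v\}$ resolves $D$, then the $n$ values $d(u,v)$ are $n$ distinct nonnegative integers, and $d(v,v)=0$. Moreover, along any shortest path $u=w_0,w_1,\dots,w_k=v$ of length $k=d(u,v)$, every subpath is again shortest, so $d(w_s,v)=k-s$; hence each integer $0,1,\dots,k$ is realized as a distance to $v$. Applying this to a vertex $u$ maximizing $d(u,v)$ shows the set of distances is exactly $\{0,1,\dots,n-1\}$. I would isolate this as a lemma: $dim(D)=1$ if and only if there is a vertex $v$ with $\{d(u,v):u\in V\}=\{0,1,\dots,n-1\}$. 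Writing $v_i$ for the unique vertex at distance $i$ from $v$ (so $v_0=v$), this indexing is the backbone of both directions.

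For the forward direction, assume $\{v\}$ resolves $D$. The farthest vertex $v_{n-1}$ has a shortest path to $v$ of length $n-1$, which therefore meets all $n$ vertices; since the distance to $v$ drops by exactly one at each step and each distance is attained uniquely, this path is forced to be $v_{n-1},v_{n-2},\dots,v_1,v$. This is the Hamiltonian path $P$ of (i) with terminal vertex $v$. To get $id(v)=1$, note any in-neighbor $w$ of $v$ has $d(w,v)=1$, hence $w=v_1$; since $(v_1,v)\in E(P)$ already exists and $D$ is simple, $id(v)=1$. For (ii), suppose $D-E(P)$ contained an arc $(v_j,v_i)$ with $1\le i<j\le n-1$; as $(v_{i+1},v_i)$ is a path arc and $D$ has no parallel arcs, this forces $j\ge i+2$, and then $d(v_j,v)\le 1+d(v_i,v)=i+1<j=d(v_j,v)$, a contradiction.

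For the converse, assume (i) and (ii) hold for $P=v_{n-1},\dots,v_1,v$. I would classify the arcs of $D$ by how they change the index. Any arc $(v_j,v_i)$ with $i<j$ is either a path arc $(v_{i+1},v_i)$ or lies in $D-E(P)$; condition (ii) forbids the latter when $i\ge 1$, while condition (i) forbids it when $i=0$, since the only in-neighbor of $v=v_0$ is $v_1$. Hence every arc of $D$ either decreases the index by exactly one (a path arc) or increases it. Consequently, on any directed path from $v_i$ to $v$ each step changes the index by $-1$ or by a positive amount; since the net change is $-i$, the number of $-1$ steps is at least $i$, so the path has length at least $i$. Combined with the length-$i$ subpath of $P$, this yields $d(v_i,v)=i$. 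The distances are then $0,1,\dots,n-1$, so $\{v\}$ resolves; since $D$ is nontrivial this gives $dim(D)=1$.

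The step I expect to need the most care is the interface between (i) and (ii). Condition (ii) only restricts arcs $(v_j,v_i)$ with $i\ge 1$, so by itself it does not prevent a short-cut arc landing directly on $v$; condition (i) is precisely what rules these out. Recognizing that (i) and (ii) together account for \emph{all} index-decreasing arcs is the crux of the converse, and it is where an attempted proof could easily go wrong by overlooking the arcs into $v_0$. By comparison, the forward direction is routine once the distance lemma is established.
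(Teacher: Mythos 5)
The paper only quotes this theorem from \cite{CRZ00} and gives no proof of its own, so there is nothing internal to compare against; judged on its own terms, your argument is correct and complete. Your reduction to the lemma that $dim(D)=1$ if and only if some vertex $v$ has $\{d(u,v):u\in V\}=\{0,1,\dots,n-1\}$ is sound (the subpath-of-a-shortest-path observation pins the distance multiset down), the forward direction correctly extracts the Hamiltonian path from a shortest $v_{n-1}$--$v$ path and uses simplicity of $D$ to rule out $j=i+1$ in (ii), and the converse correctly observes that (ii) handles index-decreasing shortcuts into $v_i$ with $i\ge 1$ while $id(v)=1$ handles those into $v_0$ --- the one place such an argument typically breaks, which you flagged and closed. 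This is essentially the standard route taken in the cited source, so no substantive divergence to report.
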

\noindent Other results include directed metric dimension of oriented trees \cite{CRZ00}, tournaments \cite{Lo13}, and some Cayley digraphs \cite{FGO06}.

\noindent In \cite{CRZ01}, Chartrand, Raines and Zhang defined a parameter called the \emph{upper orientable dimension} of a graph $G$, $ORD(G)$, which is the maximum value of $dim(D)$ among the orientations $D$ of $G$ for which $dim(D)$ is defined.

\noindent In this paper we deal with the strongly connected oriented graphs admitting a cycle covering. The graphs under consideration are oriented wheels, oriented fans, oriented friendship graphs, and amalgamation of oriented cycles.\\

\noindent A \emph{wheel}, $W_n$, is defined as $K_1+C_n$ and a \emph{fan}, $F_{m,n}$, is $\overline{K_m}+P_n$. We shall recall the (undirected) metric dimensions of the wheel $W_n$ and the fan $F_{1,n}$, which are essentially the same on general cases, studied by Buczkowski \emph{et al} \cite{BCPZ03} and Caceres \emph{et al} \cite{CHMPS05}, respectively.
\begin{theorem} \emph{\cite{BCPZ03}}
\[dim(W_n)=\left\{\begin{array}{ll}
                        3, & \hbox{$n=3,6$;} \\
                        \lfloor\frac{2n+2}{5}\rfloor, & \hbox{otherwise.}
                  \end{array}
            \right.\]
\label{Wn}
\end{theorem}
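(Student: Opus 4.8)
The plan is to work directly with the distance structure of $W_n = K_1 + C_n$. Label the hub $c$ and the rim vertices $v_0,\dots,v_{n-1}$ cyclically. First I would record, for $n \ge 4$, that $d(c,v_i)=1$ for all $i$, that $d(v_i,v_j)=1$ when $v_i,v_j$ are consecutive on $C_n$, and that $d(v_i,v_j)=2$ otherwise (the hub provides a length-$2$ detour while the two vertices are non-adjacent). Thus every distance in $W_n$ is $0$, $1$, or $2$, which makes representations very rigid.

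Then I would reduce to rim landmarks. If $B$ is a resolving set, I would argue that we may assume $B \subseteq V(C_n)$: replacing the hub by any rim vertex cannot decrease resolving power, since $d(c,v_i)=1$ for every $i$ means the hub's coordinate is constant on the rim and so never separates two rim vertices. With $B \subseteq V(C_n)$ of size $k$, the distances force the key observation that for a non-landmark rim vertex $v$ the coordinate at a landmark $b$ equals $1$ if $b$ is a cycle-neighbor of $v$ and $2$ otherwise, so $r(v\mid B)$ is completely determined by the label $S(v)=N_{C_n}(v)\cap B \in \{\emptyset,\{b\},\{b,b'\}\}$. Hence $B$ resolves the rim if and only if $S$ is injective on non-landmarks, and $B$ resolves $c$ from the rim provided no non-landmark rim vertex is adjacent to all of $B$ (automatic once $k\ge 3$).

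Next comes the combinatorial heart. The $k$ landmarks cut $C_n$ into $k$ arcs (``gaps''); let $g_1,\dots,g_k$ be the numbers of non-landmark vertices in them, so $\sum g_i = n-k$. Injectivity of $S$ translates into two constraints: an interior vertex of a gap of length $\ge 3$ carries label $\emptyset$, so at most one such vertex may exist, giving $\sum_i \max(g_i-2,0)\le 1$; and both cycle-neighbors of a landmark $b$ carry label $\{b\}$ exactly when the two gaps flanking $b$ each have length $\ge 2$, which forces no two consecutive gaps to both have length $\ge 2$. For the lower bound I would observe that the gaps of length $\ge 2$ occupy an independent set of positions in the cyclic arrangement of landmarks, hence number at most $\lfloor k/2\rfloor$, and combining this with the ``at most one gap of length $3$, none longer'' condition yields $n = k + \sum g_i \le 2k + \lfloor k/2\rfloor + 1$. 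Inverting this inequality shows $k \ge \lfloor (2n+2)/5\rfloor$.

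For the upper bound I would exhibit an explicit gap pattern meeting the bound with equality, namely alternating gaps $2,1,2,1,\dots$ with one of the $2$'s promoted to a $3$ and any leftover positions set to $1$, using exactly $\lfloor (2n+2)/5\rfloor$ landmarks, and then verify directly that $S$ is injective and that $c$ is separated (here $k\ge 3$ is used). The main obstacle, and the source of the exceptional values, is the small-$k$ regime: when $k\le 2$ the hub can coincide with a rim vertex adjacent to every landmark, and two gaps may lie between the same pair of landmarks, so the clean labeling argument breaks down. I would finish by checking $n=3,4,5,6$ by hand --- $W_3=K_4$ forces dimension $3$, while $W_4$ and $W_5$ are resolved by two adjacent rim vertices, and for $W_6$ a short case analysis shows every $2$-set fails (some pair of rim vertices, or the hub and a rim vertex, collide) whereas a suitable $3$-set works --- thereby establishing the two exceptional entries $n=3,6$.
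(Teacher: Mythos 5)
This statement is Theorem~\ref{Wn}, which the paper quotes from Buczkowski, Chartrand, Poisson and Zhang \cite{BCPZ03} without proof; there is no internal argument to compare against, so your proposal stands or falls on its own. On its own it is essentially sound and follows the standard landmark--gap analysis: the diameter-$2$ structure, the reduction to rim landmarks, the observation that a non-landmark rim vertex is determined by $S(v)=N_{C_n}(v)\cap B$, the two constraints $\sum_i\max(g_i-2,0)\le 1$ and ``no two cyclically consecutive gaps both of length $\ge 2$,'' and the resulting inequality $n\le 2k+\lfloor k/2\rfloor+1$, whose inversion $k\ge\lceil (2n-2)/5\rceil=\lfloor(2n+2)/5\rfloor$ is the correct arithmetic. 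Your diagnosis of the exceptions is also right: for $k=2$ the hub constraint is not automatic (this is exactly what kills the gap pattern $(3,1)$ for $n=6$), and two length-$1$ gaps can share the same landmark pair.

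Two points deserve tightening. First, the reduction step as literally stated --- ``replacing the hub by any rim vertex cannot decrease resolving power'' --- is false as a general claim; what you actually need (and what suffices for the lower bound) is that if $c\in B$ then $B\setminus\{c\}$ already distinguishes all rim vertices pairwise, since $c$'s coordinate is identically $1$ on the rim, so the gap constraints apply with $|B|-1$ landmarks and the bound only improves. Second, the upper-bound construction (``alternating $2,1,2,1,\dots$ with one $2$ promoted to $3$ and leftover positions set to $1$'') needs the short residue-class verification that for every $n\ge 7$ and $k=\lfloor(2n+2)/5\rfloor$ one can distribute $n-k$ among $k$ gaps respecting both constraints, together with the check that $k\ge 3$ so the hub and the two-element labels cause no collisions. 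Both are routine, but as written they are asserted rather than proved.
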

\begin{theorem} \emph{\cite{CHMPS05}}
\[dim(F_{1,n})=\left\{\begin{array}{ll}
                            1, & \hbox{for $n=1$;} \\
                            2, & \hbox{for $n=2,3$;} \\
                            3, & \hbox{for $n=6$;} \\
                            \lfloor\frac{2n+2}{5}\rfloor, & \hbox{otherwise.}
                      \end{array}
               \right.\]
\label{F1n}
\end{theorem}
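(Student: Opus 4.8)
The plan is to work directly with the distance structure of $F_{1,n}=K_1+P_n$. Write $c$ for the central vertex and $v_1,\dots,v_n$ for the path, so that $c$ is adjacent to every $v_i$ and $v_i\sim v_{i+1}$. For $n\ge 3$ the graph has diameter $2$, and the distances are completely explicit: $d(c,v_i)=1$, $d(v_i,v_{i+1})=1$, and $d(v_i,v_j)=2$ whenever $|i-j|\ge 2$ (the shortest route then goes through $c$). First I would argue that the hub $c$ may be excluded from a minimum resolving set: since $d(v_i,c)=1$ for all $i$, the coordinate coming from $c$ is constant on the path and separates no two path vertices, while $c$ itself is automatically distinguished from the path vertices as soon as at least three path landmarks are present. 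Hence the problem reduces to choosing a set $W\subseteq\{v_1,\dots,v_n\}$ of landmarks on the path.

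The crucial observation is that, for a non-landmark vertex $v_i$, its representation with respect to $W$ consists entirely of $2$'s except for a $1$ in each coordinate whose landmark is a path-neighbour of $v_i$. Thus the representation is encoded by the neighbour-set $N_W(i)=\{\,j\in W:|i-j|=1\,\}\subseteq\{i-1,i+1\}$, and two non-landmark vertices are resolved exactly when these sets differ. Analyzing the four possible values $\emptyset,\{i-1\},\{i+1\},\{i-1,i+1\}$ turns ``resolving'' into two clean conditions on the \emph{gaps} of $W$ (maximal runs of consecutive non-landmarks): (i) at most one vertex may carry the empty code, which forces every interior gap to have length at most $3$ with at most one gap of length exactly $3$ (and the two end-gaps to be short); and (ii) no landmark may have both of its path-neighbours carrying the same singleton code, which forbids two consecutive gaps from both having length $\ge 2$. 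This analysis parallels that of the wheel $W_n$ in Theorem \ref{Wn}, the only difference being that $P_n$ is an open path, so the cyclic symmetry is broken at $v_1$ and $v_n$ — which is exactly what produces the different exceptional values.

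With these two conditions in hand the lower bound becomes an optimization over admissible gap sequences: conditions (i)--(ii) force the landmark density to be at least $2/5$, so that $|W|\ge\lfloor(2n+2)/5\rfloor$, the additive slack coming from the single permitted length-$3$ gap and from the end-gaps. For the matching upper bound I would exhibit an explicit set realizing the extremal pattern — landmarks placed so that the gap lengths alternate $2,1,2,1,\dots$, with one length-$3$ gap inserted to absorb the residue of $n$ modulo $5$ — and verify directly that it satisfies (i)--(ii), hence resolves $F_{1,n}$. I expect the main obstacle to be the bookkeeping at the two ends of the path: the boundary vertices $v_1,v_n$ have only one path-neighbour, so the gap conditions must be restated with care there, and the extremal count (together with the floor) must be checked against each residue class of $n$ modulo $5$ to land exactly on $\lfloor(2n+2)/5\rfloor$.

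Finally I would dispose of the small cases by direct inspection, since there the general density argument is not yet in force. Here $F_{1,1}=K_2$ gives $dim=1$; $F_{1,2}=K_3$ and $F_{1,3}$ each require $2$ landmarks because too few distinct codes are available to separate all vertices; and $F_{1,6}$ is the genuine exception where the naive value $\lfloor 14/5\rfloor=2$ fails — two landmarks cannot produce enough distinct representations for the remaining vertices together with the hub — so that $dim(F_{1,6})=3$. These finitely many checks complete the evaluation.
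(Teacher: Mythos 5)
This statement is Theorem~\ref{F1n}, which the paper quotes from \cite{CHMPS05} as background and does not prove, so there is no in-paper argument to compare yours against. Judged on its own terms, your outline is the standard and correct route to this result, essentially the gap-and-code analysis used for wheels in \cite{BCPZ03} and adapted to fans in \cite{CHMPS05}: the distance matrix of $F_{1,n}$ for $n\ge 3$ is exactly as you describe, the hub coordinate is constant on the path and hence useless for separating path vertices, the code of a non-landmark $v_i$ is determined by $W\cap\{v_{i-1},v_{i+1}\}$, and the two resulting gap conditions (at most one empty code, hence at most one interior gap of length $3$ and none longer; at most one singleton code per landmark, hence no two consecutive ``singleton-producing'' gaps) give the $2/5$ density and the floor formula, with $n=1,2,3,6$ as the finitely many exceptions you check by hand. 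The places where your sketch is genuinely incomplete are the ones you already flag: the end-gaps behave differently (an end-gap of length $1$ already produces a singleton code, and an end-gap of length $2$ already produces an empty code, unlike interior gaps), so conditions (i)--(ii) must be restated there before the extremal count can be run through the residues of $n$ modulo $5$; and the reduction ``$c$ may be excluded'' needs the explicit caveat that when only two path landmarks are used ($n=4,5$) one must separately rule out a path vertex adjacent to both landmarks, since it would collide with the hub's all-ones code --- this collision is precisely what forces the exceptional value $3$ at $n=6$. With that bookkeeping done, your argument is complete and matches the cited literature.
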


\noindent For $n\geq 2$, let $\mathcal{C}=\{C_{t_i}|i=1,2,...,n\}$ be a collection of $n$ cycles. The \emph{vertex amalgamation} of cycles in $\mathcal{C}$, $Amal\{C_{t_i}\}_{i=1}^n$, is the graph constructed by joining the cycles in $\mathcal{C}$ on a common vertex called the \emph{terminal vertex}. The \emph{edge amalgamation} of cycles in $\mathcal{C}$, $Edge-Amal\{C_{t_i}\}_{i=1}^n$, is the graph constructed by joining $n$ cycles on a common edge called the \emph{terminal edge}. Iswadi \emph{et al} \cite{IBSS10} determined the dimension of $Amal\{C_{t_i}\}_{i=1}^n$ and Simanjuntak \emph{et al} \cite{SABISU} determined the dimension of $Edge-Amal\{C_{n_i}\}_{i=1}^t$ as stated bellow.
\begin{theorem} \emph{\cite{IBSS10}}
Let $Amal\{C_{t_i}\}_{i=1}^n$ be a vertex amalgamation of $n$ cycles that consists of $n_1$ number of odd cycles and $n_2$ number of even cycles. Then,
\[dim(Amal\{C_{t_i}\}_{i=1}^n)=\left\{\begin{array}{ll}
                                            n_1, & \hbox{for $n_2=0$;} \\
                                            n_1+2n_2-1, & \hbox{for $n_2>0$.}
                                      \end{array}
                               \right.\]
\label{vamal}
\end{theorem}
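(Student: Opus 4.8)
The plan is to establish matching upper and lower bounds by exploiting the cut-vertex structure at the terminal vertex. Throughout, write $v_0$ for the terminal vertex and, for a vertex $u$ on the cycle $C_{t_i}$, let $h(u)=d(u,v_0)$ denote its \emph{height}; since every path joining two different cycles must pass through $v_0$, we have $d(u,w)=h(u)+h(w)$ whenever $u,w$ lie on distinct cycles. Two observations drive everything. First, a landmark lying outside cycle $i$ reports only $h(u)+\text{const}$ for a vertex $u$ on cycle $i$, and $v_0$ reports only $h$; hence a \emph{symmetric pair}, meaning two vertices of one cycle at equal height, can be separated solely by a landmark interior to that same cycle. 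Second, each cycle $C_{t_i}$ admits the reflection fixing $v_0$; a resolving set cannot be fixed pointwise by this automorphism, which already forces at least one interior landmark in each odd cycle and at least one \emph{non-antipodal} interior landmark in each even cycle, giving the crude bound $n_1+n_2$.

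The heart of the argument is the extra $n_2-1$ term. I would call a vertex $u$ on cycle $i$ \emph{far} if $d(u,b)=h(u)+h(b)$ for every landmark $b$ interior to cycle $i$, i.e. all its geodesics to those landmarks run through $v_0$. The key lemma is: if $u,w$ lie on different cycles, are both far, and satisfy $h(u)=h(w)$, then $r(u\mid B)=r(w\mid B)$, because every coordinate collapses to a purely height-based value and the two agree. The decisive computation is that an even cycle carrying a single non-antipodal interior landmark always possesses a far vertex of height $1$ (the height-$1$ vertex on the side opposite the landmark, whose geodesic to the landmark wraps through $v_0$), whereas an odd cycle whose single landmark sits at maximum height $k_i$ has no far vertex at all (its eccentricity from that landmark is only $k_i$). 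Consequently at most one cycle in the whole graph can own a far height-$1$ vertex, so at most one even cycle may contain a single landmark while every other even cycle needs at least two. Combined with at least one landmark per odd cycle and disjointness of the branches, this yields $\dim\ge n_1+2(n_2-1)+1=n_1+2n_2-1$ when $n_2>0$, and $\dim\ge n_1$ when $n_2=0$.

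For the upper bound I would exhibit an explicit basis realizing these numbers: place one landmark at maximum height $k_i$ on each odd cycle; place the two neighbours of $v_0$ (positions $1$ and $2k_i-1$) on all but one even cycle; and place a single landmark adjacent to $v_0$ on the one remaining even cycle. This uses exactly $n_1+2(n_2-1)+1$ vertices (and $n_1$ in the all-odd case). I would then verify three things: each configuration resolves its own cycle internally once heights are known (the max-height landmark separates each symmetric pair by consecutive distances $k_i-s$ and $k_i+1-s$, and the two neighbouring landmarks separate a pair by $s-1$ versus $s+1$); the odd configurations and the two-neighbour even configurations admit \emph{no} far vertices, since positions $1$ and $2k_i-1$ cannot both be reached through $v_0$ from any single vertex; and therefore the distinguished even cycle is the unique cycle owning far vertices. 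Any two distinct vertices are then separated by a routine case split: same cycle (handled internally), different cycles of different height (by a height-reporting landmark), and different cycles of equal height, where at most one vertex is far, so the non-far one differs in the coordinate of some landmark of its own cycle.

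The main obstacle is squarely the even-cycle surplus, i.e. proving the key lemma in the sharp form and isolating exactly which vertex it exploits. The subtlety is that the ``$2$ per even cycle'' requirement is genuinely global rather than local—one even cycle is allowed a single landmark, which is precisely the source of the $-1$—so the counting must be organized around the \emph{far height-$1$} invariant rather than around any single cycle in isolation. A secondary care point is the degenerate small cases (for instance $n=2$, where no third landmark is available to certify heights directly); these I expect to dispatch by direct inspection against the same invariant, consistently with the two limiting formulas agreeing whenever $n_2=1$.
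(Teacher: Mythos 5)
This statement is Theorem~\ref{vamal}, which the paper merely quotes from \cite{IBSS10} as background in the introduction; it supplies no proof of its own, so there is nothing in the paper to compare your argument against (the paper's own amalgamation results concern \emph{oriented} cycles and give the different answer $t-1$). Judged on its own terms, your proposal is correct and essentially complete as an outline, and it follows the natural cut-vertex strategy. The two load-bearing points both check out: (i) an even cycle $C_{2k}$ whose unique interior landmark sits at a non-antipodal position $s\le k-1$ really does have a far non-landmark vertex at height $1$ on the opposite side, since $\min(s+1,\,2k-1-s)=s+1$ exactly when $s\le k-1$, and two such vertices on different cycles are forced to share a representation; (ii) a max-height landmark on an odd cycle and the pair of neighbours of $v_0$ on an even cycle leave no far vertices, so the distinguished even cycle is the only source of them. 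Two small points you should tighten in a write-up: your key lemma needs the hypothesis that neither far vertex is itself a landmark (which does hold in the application, since the far height-$1$ vertex lies on the opposite side from the single landmark), and the ``different cycles, different heights'' case needs no third cycle at all --- if $u\in C_i$ and $w\in C_j$ collided, a landmark $b\in C_j$ gives $h(u)+h(b)=d(w,b)\le h(w)+h(b)$ and a landmark $b'\in C_i$ gives the reverse inequality, forcing $h(u)=h(w)$; this disposes of your worry about $n=2$.
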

\begin{theorem} \emph{\cite{SABISU}}
\[t-2 \leq dim(Edge-Amal\{C_{n_i}\}_{i=1}^t) \leq t.\]
%$$dim(edge-Amal\{C_{n_i}\}_{i=1}^t)=\left\{
%                                     \begin{array}{ll}
%                                       t, & \hbox{(5,5,3);} \\
%                                        & \hbox{(3,3,...,3);} \\
%                                        & \hbox{(5,3,...,3);} \\
%                                        & \hbox{$t=2$;} \\
%                                        & \hbox{$t=3$, for all odd $n_i>3$;} \\
%                                        & \hbox{$t=3,4$, all $n_i$ is even;} \\
%                                       t-2, & \hbox{if and only if $edge-Amal\{C_{n_i}\}_{i=1}^t$ admits}\\
%& \hbox{configuration I - V;} \\
%                                       t-1, & \hbox{otherwise.}
%                                     \end{array}
%                                   \right.
%$$
\label{eamal}
\end{theorem}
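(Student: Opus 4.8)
The plan is to use the fact that $D := Edge\text{-}Amal\{C_{n_i}\}_{i=1}^t$ (with $t\ge 2$) has a transparent structure: the terminal edge has endpoints $u,v$, and apart from this edge the graph consists of $t$ internally disjoint paths (``branches'') $P_1,\dots,P_t$ joining $u$ to $v$, where $P_i$ has $n_i-2$ internal vertices. On branch $i$ I write $a_i$ for the neighbour of $u$; when $n_i=3$ the branch has a single internal vertex, which is $a_i$ and is also adjacent to $v$. Since every geodesic from a vertex $x$ off branch $i$ must enter that branch through $u$ or $v$, I first record the basic identity $d(w,x)=\min\{d(w,u)+d(u,x),\,d(w,v)+d(v,x)\}$ whenever $w,x$ lie on different branches, together with $d(a_i,u)=1$ and $d(a_i,v)=\min\{n_i-2,2\}$.

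For the lower bound I would show that at most two branches can fail to contain an internal vertex of a resolving set $B$; call such branches \emph{free}. Classify each branch by the ``type'' of its $u$-neighbour, namely the pair $(d(a_i,u),d(a_i,v))$, which equals $(1,1)$ if $n_i=3$ and $(1,2)$ if $n_i\ge 4$ --- only two possible types. Using the identity above, two $u$-neighbours of the same type have equal distance to $u$, to $v$, and to every vertex lying on a third branch; hence $a_i$ and $a_j$ are separated only by internal vertices of branches $i$ or $j$. Consequently, if two free branches shared a type their $u$-neighbours would have identical representation, contradicting that $B$ resolves $D$ (note this holds even if $u,v\in B$, since the poles do not separate a same-type pair). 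If three or more branches were free, the pigeonhole principle applied to the two types would force two of them to share a type, again a contradiction. Therefore at most two branches are free, so $B$ meets the internal vertices of at least $t-2$ pairwise disjoint branches, giving $|B|\ge t-2$.

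For the upper bound I would exhibit an explicit resolving set of size $t$, the natural candidate being $B=\{a_1,\dots,a_t\}$, one $u$-neighbour per branch. I would verify two things. First, a vertex $w$ on branch $k$ is recognised through its $k$-th coordinate: $d(w,a_k)$ can be realised inside branch $k$ without returning to a pole and is therefore ``small'', whereas for $w'$ on another branch the identity above forces $d(w',a_k)\ge d(w',u)+1$; comparing this with the $(d(\cdot,u),d(\cdot,v))$-data, which is encoded by the remaining coordinates, separates vertices of different branches. Second, two vertices on the same branch $k$ are separated by $d(\cdot,a_k)$ together with their differing distances to $u$ and $v$, the point being that moving along the branch changes these quantities monotonically until the geodesic switches to routing through the opposite pole. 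Finally the poles $u,v$ and the $a_i$ themselves are handled directly, each $a_i$ being the unique vertex with a zero in coordinate $i$.

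The hard part will be the upper-bound verification that no two vertices on different branches collide. The delicate point is that for vertices near the far end of a branch the geodesic to $a_k$ may route through $v$ rather than stay inside branch $k$, so the clean ``small local coordinate'' heuristic must be replaced by a careful comparison of $\min\{d(w,u)+1,\,d(w,v)+d(v,a_k)\}$ across branches, with separate treatment of triangle branches ($n_i=3$) and of parity effects that decide on which side a midpoint vertex's geodesic falls. I would organise this as a short case analysis on the positions of the two candidate vertices relative to their poles, which is routine once the distance identity is in hand but is where essentially all the bookkeeping lives.
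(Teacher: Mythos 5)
A preliminary remark: the paper contains no proof of Theorem \ref{eamal}. It is quoted from the preprint \cite{SABISU} as background for the directed results of Section 2 (the nearest in-paper relative is the final corollary on edge amalgamation of \emph{strongly oriented} cycles, which is a different, directed statement derived from the path-amalgamation theorem), so there is no in-house argument to compare yours against and I can only judge the proposal on its own terms. Your lower bound is sound: since $\{u,v\}$ separates the interiors of distinct branches, two $u$-neighbours $a_i,a_j$ with the same pair $(d(a_i,u),d(a_i,v))\in\{(1,1),(1,2)\}$ agree in distance to $u$, to $v$, and to every vertex off branches $i$ and $j$, so a resolving set must contain an internal vertex of branch $i$ or branch $j$; pigeonholing on the two types leaves at most two free branches and gives $|B|\ge t-2$.

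The upper bound, however, has a genuine gap: the candidate $B=\{a_1,\dots,a_t\}$ of $u$-neighbours is not a resolving set in general. Take $t=2$ and two hexagons sharing the edge $uv$, with branches $u,a_i,b_i,c_i,d_i,v$ for $i=1,2$. Then $d(d_1,a_1)=\min\{3,3\}=3$ (along the branch, or via $v$ and $u$) and $d(d_1,a_2)=\min\{d(d_1,u)+1,\ d(d_1,v)+2\}=\min\{3,3\}=3$, so $r(d_1|B)=(3,3)=r(d_2|B)$. The failure is systematic: whenever two cycles have $n_i,n_j\ge 6$, the internal vertices adjacent to $v$ on those two branches receive identical representations with respect to $\{a_1,\dots,a_t\}$. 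This is exactly the ``geodesic routes through $v$'' difficulty you flag at the end, but it is not mere bookkeeping --- it kills the chosen set. The remedy is to move the landmarks away from the poles (for the double hexagon, $B=\{b_1,b_2\}$, the second internal vertices, does resolve: one checks all ten representations are distinct), so the upper-bound half needs a different choice of one vertex per cycle, positioned roughly antipodally to the terminal edge, before the case analysis you describe can go through.
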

\noindent In this paper, we will also consider a generalization of vertex and edge amalgamations, which is the path amalgamation.

\section{Main Result}

\noindent We start by introducing the notion of simple orientation.
\begin{definition}
Let $G$ be a graph with cyclic covering. An orientation on $G$ is called \textbf{$C_n$-simple} if all directed $C_n$s in the resulting oriented graph are strong.
\end{definition}

\subsection{Directed Metric Dimension of Oriented Wheels}

\noindent For $n\geq 3$, a \emph{wheel}, $W_n$, is defined as $K_1+C_n$, where the vertex $c$ in $K_1$ is called the \emph{center} and the vertices $v_1, v_2, \ldots, v_n$ in $C_n$ is called the \emph{outer vertices}. First we shall characterize oriented wheels which admitting $C_3$-simple orientations.
\begin{lemma}
There exists a $C_3$-simple orientation on $W_n$ if and only if $n$ is even.
\label{simpleWn}
\end{lemma}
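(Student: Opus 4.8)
The plan is to reduce the existence of a $C_3$-simple orientation to a proper $2$-coloring problem on the cycle $C_n$. First I would identify precisely which subgraphs of $W_n$ are triangles: for $n\geq 4$ the rim cycle $C_n$ contains no triangle, so every $C_3$ in $W_n$ consists of the center $c$ together with two consecutive outer vertices, i.e.\ it has the form $T_i=\{c,v_i,v_{i+1}\}$ (indices taken modulo $n$). The exceptional case $n=3$, where $W_3=K_4$ additionally has the outer triangle, can be treated separately, but it is immediate since $n=3$ is odd and the argument below already fails for the spoke-triangles.

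The crux is a local condition on each $T_i$. A triangle on three vertices is strong if and only if its three arcs form a directed $3$-cycle. For $T_i$ this is equivalent to requiring that, of the two spokes $cv_i$ and $cv_{i+1}$, exactly one points out of $c$ and the other into $c$; in other words, the two spokes incident to the endpoints of a given rim edge must have opposite orientations relative to the center. I would encode each spoke by a label in $\{\mathrm{in},\mathrm{out}\}$ recording its direction at $c$, so that the condition ``$T_i$ is strong'' becomes ``the labels of $v_i$ and $v_{i+1}$ differ.''

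Then I would run the two directions. For the forward implication, assume a $C_3$-simple orientation exists; applying the local condition to every $T_i$ forces consecutive spokes around the rim to carry opposite labels, which is exactly a proper $2$-coloring of the vertices of $C_n$ by the spoke labels. Such a coloring exists if and only if $C_n$ is bipartite, i.e.\ if and only if $n$ is even. For the converse, when $n$ is even I would exhibit an explicit orientation: label the spokes alternately $\mathrm{out}/\mathrm{in}$ around the rim, and then orient each rim edge $v_iv_{i+1}$ in the unique way that completes $T_i$ into a directed $3$-cycle. Since for even $n\geq 4$ the only triangles are the $T_i$, every $C_3$ is strong by construction.

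The step I expect to require the most care is the clean formulation of the local condition, together with two consistency checks in the converse: that the forced rim-edge orientations never conflict (each rim edge lies in exactly one triangle $T_i$, so they cannot), and that no triangles beyond the $T_i$ appear for even $n$. Once the reduction to bipartiteness of $C_n$ is established, the parity conclusion $n$ \emph{even} follows at once.
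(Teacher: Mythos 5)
Your proposal is correct and follows essentially the same route as the paper: the paper's necessity argument also propagates the forced alternation of spoke orientations around the rim (starting from $cv_1$) until it fails at the last triangle when $n$ is odd, and its sufficiency construction is exactly your alternating in/out spoke labeling with rim edges oriented to complete each directed triangle. Your bipartiteness formulation and the explicit checks (each rim edge lies in a unique triangle; the $n=3$ case) are just a cleaner packaging of the same idea.
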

\begin{proof} Suppose $n$ is odd. Considering all possible orientations for the edge $cv_1$, $(c,v_1)$ or $(v_1,c)$ will result in the last $C_3$ in $W_n$ being not strong. For the sufficiency, we define the arc set $A=\{(c,v_i), (v_i,v_{i+1}), (v_i,v_{i-1})|i \ {\rm odd}\} \cup \{(v_i,c)|i \ {\rm even}\} \cup \{(v_1,v_n)\}$ which yields a $C_3$-simple orientation on $W_n$.
\end{proof}

\begin{theorem}
If $W_n$ is a wheel with even $n$ admitting a $C_3$-simple orientation then
\[dim(W_n)=\left\{\begin{array}{ll}
                        2, & \hbox{for $n=4$;} \\
                        \frac{n}{2}-1, & \hbox{for $n \geq 6$.}
                  \end{array}
            \right.\]
\label{evenWn}
\end{theorem}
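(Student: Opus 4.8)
The plan is to work with the $C_3$-simple orientation produced in Lemma~\ref{simpleWn}, first observing that it is essentially forced. Each triangle $cv_iv_{i+1}$ must be a directed $3$-cycle, so the two spokes $cv_i, cv_{i+1}$ point in opposite senses (one out of $c$, one into $c$) and the rim edge is directed from the vertex whose spoke leaves $c$ to the vertex whose spoke enters $c$. Hence the outer vertices alternate between \emph{sources} (spoke $c\to v_i$, both incident rim edges outgoing) and \emph{sinks} (spoke $v_i\to c$, both incident rim edges incoming); since $n$ is even this $2$-colouring closes up, and up to relabelling the sources are the odd-indexed vertices and the sinks the even-indexed ones. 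This makes the distance function well defined independently of the chosen $C_3$-simple orientation.

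Next I would record the distances. From the center, $d(c,v_i)=1$ for odd $i$ and $2$ for even $i$. From a source $v_i$ one has $d(v_i,c)=2$, $d(v_i,v_{i\pm1})=1$, $d(v_i,v_j)=3$ for every other source, and $d(v_i,v_j)=4$ for a non-adjacent sink. From a sink $v_i$ one has $d(v_i,c)=1$, $d(v_i,v_j)=2$ for every source, and $d(v_i,v_j)=3$ for every other sink. The key consequence is that for two distinct sinks $v_e,v_{e'}$, \emph{any} landmark other than $v_e$ or $v_{e'}$ itself assigns them equal distance (a fellow sink gives $3$, a source gives $2$, the center gives $1$). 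This yields the lower bound at once: if two distinct sinks both lie outside a resolving set $B$ they have identical representations, so at most one sink may be omitted from $B$, forcing $dim(W_n)\ge \frac{n}{2}-1$.

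For the matching upper bound when $n\ge6$ I would take $B=\{v_2,v_4,\dots,v_{n-2}\}$, all sinks except $v_n$. Cross-type collisions are impossible because the sink, source, and center representations use the disjoint value sets $\{0,3\}$, $\{1,4\}$, and $\{2\}$ respectively; the sinks are separated by the position of their unique $0$ (with $v_n$ receiving the all-$3$ vector); and a source $v_i$ is coded by the set of its in-$B$ sink-neighbours, which are consecutive even indices, all distinct, except that $v_1$ and $v_{n-1}$ receive the singletons $\{v_2\}$ and $\{v_{n-2}\}$. I expect the main obstacle to be exactly this last step, since the two boundary sources $v_1,v_{n-1}$ are resolved only when $\{v_2\}\ne\{v_{n-2}\}$, i.e. $n\ge6$. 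For $n=4$ the same computation shows a single landmark cannot separate the two sources (both adjacent to the lone sink landmark), so $dim(W_4)\ge2$, while an explicit pair such as $\{v_1,v_2\}$ resolves $W_4$; this accounts for the special value $2$.
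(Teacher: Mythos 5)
Your proposal is correct and follows essentially the same route as the paper: the forced source/sink alternation of the outer vertices, the lower bound from the fact that at most one vertex of $V_2$ (the sinks) can be omitted from a resolving set, and the upper bound via the set of all sinks but one. In fact you are slightly more careful than the paper at the one delicate point, namely checking that the two boundary sources $v_1$ and $v_{n-1}$ receive distinct singleton neighbour-sets $\{v_2\}\neq\{v_{n-2}\}$ precisely when $n\geq 6$, which is exactly what makes $n=4$ exceptional.
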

\begin{proof}
There are only two possibilities of $C_3$-simple orientation on $W_n$:
\begin{description}
 \item[(A)] where $A=\{(c,v_i), (v_i,v_{i+1}), (v_i,v_{i-1})|i \ {\rm odd}\} \cup \{(v_i,c)|i \ {\rm even}\} \cup \{(v_1,v_n)\}$, or
 \item[(B)] where $A=\{(c,v_i), (v_i,v_{i+1}), (v_i,v_{i-1})|i \ {\rm even}\} \cup \{(v_i,c)|i \ {\rm odd}\} \cup \{(v_n,v_1)\}$.
\end{description}
Thus $W_n$ does not contain an oriented Hamiltonian path and so by Theorem \ref{dim1}, $dim(W_n)>1$.

\noindent For $n=4$, let $B=\{v_1,v_2\}$, then under orientation A or B, $r(c|B)=(1,2) \ {\rm or} \ (2,1)$, $r(v_3|B)=(3,1) \ {\rm or} \ (3,2)$ and $r(v_4|B)=(2,3) \ {\rm or} \ (1,3)$, respectively. Therefore, $dim(W_4)=2$.

\noindent For $n \geq 6$, based on a $C_3$-simple orientation on $W_n$, we define a partition of $V(W_n)$: $V_0=\{c\}$, $V_1=\{v_i|d(c,v_i)=1\}$, and $V_2=\{v_i|d(c,v_i)=2\}$. Since $d(x,y)=2$ for $x\in V_2$, $y\in V_1$ and $d(x,y)=3$ for $x,y\in V_2$ then we could have at most one vertex in $V_2$ omitted from a resolving set. Thus $dim(W_n) \geq \frac{n}{2}-1$. For the upper bound, let $B$ be a subset of $V_2$ of cardinality $\frac{n}{2}-1$ and $b\in B$, then $d(v,b)=1 \ {\rm or} \ 4$, for $v\in V_1$, $d(c,b)=2$, and $d(v,b)=3$, for $v\in V_2\setminus B$. Thus in $r(v|B)$, $v\in V_1$, the coordinates  related to the neighbors of $v$ are 1 and those related to the non-neighbors of $v$ are 4. Additionally, $r(c|B)=(2,2,\ldots,2)$ and $r(v|B)=(3,3,\ldots,3)$ for $v\in V_2\setminus B$. Therefore $B$ resolves $W_n$ and $dim(W_n) \leq \frac{n}{2}-1$, and we obtain $dim(W_n) = \frac{n}{2}-1$.
\end{proof}

\noindent By Lemma \ref{simpleWn}, a wheel $W_n$ with $n$ odd is not $C_3$-simple, however we could consider a subgraph of $W_n$ admitting a $C_3$-simple; the subgraph is a fan $F_{1,n}=K_1 + P_n$, where $V(P_n)=\{v_1, v_2, \ldots, v_n\}$.

%let $C_n^1=x_1v_1x_2v_2\cdots x_kv_kx_{k+1}x_1$ as \emph{an outer cycle} of $W_n$ for $od(u)>id(u)$ and $C_n^2=v_1x_1v_2 x_2 \cdots v_kx_kv_{k+1}v_1$ as \emph{an outer cycle} of $W_n$ for $od(u)<id(u)$. If $n$ is odd, the orientation of $W_n$ is not $C_3-$simple since there exists a non-strongly connected $C_3$, denoted $C_3^*$ for $od(u)>id(u)$ and $C_3^\diamond$ for $od(u)<id(u).$ Define $C_3^*=ux_1x_{k+1}$ and $C_3^\diamond=uv_1v_{k+1}.$ Now, we can write $W_n=F_{1,n}\cup {C_3}^*$ for $od(u)>id(u)$ and $W_n=F_{1,n}\cup C_3^\diamond$ for $od(u)<id(u).$ Figure \ref{Gam3.11} illustrates $W_n$ for $od(u)>id(u)$ and $od(u)<id(u).$
%\begin{figure}[h!]
%\begin{center}
  % Requires \usepackage{graphicx}
  %\includegraphics[width=14cm]{W_odd}\\
  %\caption{The Oriented $W_n$ with (a) $od(u)>id(u)$ and (b) $od(u)<id(u)$}\label{Gam3.11}
%\end{center}
%\end{figure}
%
\begin{theorem}
If $W_n$ is a wheel with odd $n$ which contains a fan $F_{1,n}$ admitting a $C_3$-simple orientation then
\[dim(W_n)=\left\{\begin{array}{ll}
                        1, & \hbox{for $n=5$;} \\
                        \frac{n-3}{2}, & \hbox{for $n \geq 7$, $od(c)>id(c)$ or}\\
                                       & \hbox{$od(c)<id(c)$ and $(v_n,v_1)\in E(W_n)$;}\\
                        \frac{n-1}{2}, & \hbox{otherwise.}
                  \end{array}
            \right.\]
\label{oddWn}
\end{theorem}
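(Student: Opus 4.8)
The plan is to mirror the structure of the proof of Theorem \ref{evenWn}, adapting it to the odd case where a $C_3$-simple orientation lives on the fan subgraph $F_{1,n}$ rather than on the full wheel. First I would make explicit the orientation produced by Lemma \ref{simpleWn} restricted to the fan: the center $c$ receives arcs to/from the outer vertices according to parity, and the two ``boundary'' edges $cv_1$ and $cv_n$ together with the remaining rim edge $v_nv_1$ (when present in $W_n$) determine whether $od(c)>id(c)$ or $od(c)<id(c)$. I would record, as in the even case, a distance partition of $V(W_n)$ driven by $d(c,v_i)\in\{1,2\}$, and compute the pairwise distances between rim vertices in the two layers. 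The key structural fact, as in Theorem \ref{evenWn}, is that vertices in the distance-$2$ layer are mutually at distance $3$ and dominate the distance-$1$ layer at distance $2$, so at most one such vertex can be spared from any resolving set; this yields the lower bound $\tfrac{n-1}{2}-1=\tfrac{n-3}{2}$ on the favorable branch.

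Next I would handle the small case $n=5$ separately by Theorem \ref{dim1}: I would exhibit the Hamiltonian path through the oriented fan and check conditions (i) and (ii), verifying that a single terminal vertex $v$ with $id(v)=1$ resolves the graph, giving $dim(W_5)=1$. For the generic branches with $n\geq 7$ I would split into the two regimes indicated by the statement. In the regime $od(c)>id(c)$, or $od(c)<id(c)$ together with $(v_n,v_1)\in E(W_n)$, the extra rim arc supplies one additional shortcut that collapses one coordinate's worth of ambiguity, so a resolving set of size $\tfrac{n-3}{2}$ drawn from the distance-$2$ layer suffices; I would verify resolvability exactly as in the even proof by computing $r(v\mid B)$ on each layer and checking the neighbor/non-neighbor coordinates are $1$ versus $4$ (or the appropriate shortened values), with $c$ and the spared layer-$2$ vertex receiving the constant vectors $(2,\dots,2)$ and $(3,\dots,3)$.

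For the complementary ``otherwise'' branch I would show the sparing argument fails: without the rim arc $(v_n,v_1)$ (and with $od(c)<id(c)$) the two endpoints $v_1,v_n$ of the path become distance-wise indistinguishable unless one of them is forced into $B$, which costs the extra vertex and pushes the dimension up to $\tfrac{n-1}{2}$. Concretely I would isolate the pair of rim vertices whose representations coincide under any size-$\tfrac{n-3}{2}$ candidate set and conclude the lower bound $dim(W_n)\geq\tfrac{n-1}{2}$, then match it with an explicit resolving set of that size.

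The main obstacle I anticipate is the branch analysis keyed to the sign of $od(c)-id(c)$ and the presence of the arc $(v_n,v_1)$: unlike the clean even case, the odd fan breaks the rotational symmetry at the path's two ends, so the distances from the endpoint rim vertices through $c$ versus along the rim need a careful, orientation-specific case check to pin down exactly which pair of vertices fails to be resolved and why the single extra rim arc rescues the smaller bound. I would therefore invest most of the effort in a clean bookkeeping of distances near $v_1$ and $v_n$, and in showing that the endpoint ambiguity is the \emph{only} obstruction, so that once it is resolved the remaining representations separate automatically just as in Theorem \ref{evenWn}.
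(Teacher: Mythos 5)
Your plan for $n\ge 7$ follows the paper's proof in outline (the distance partition $V_0,V_1,V_2$ by $d(c,v_i)$, the observation that $V_2$-vertices are mutually at distance $3$ and at distance $2$ from $V_1$, hence at most one of them can be left out of a resolving set, and an explicit resolving set chosen inside $V_2$), but your bookkeeping misses the actual source of the two formulas. You take $|V_2|=\frac{n-1}{2}$ throughout; that is only true when $od(c)>id(c)$. When $od(c)<id(c)$, the alternation forced by $C_3$-simplicity puts \emph{both} endpoints $v_1,v_n$ into the distance-$2$ layer, so $|V_2|=\frac{n+1}{2}$, and the generic count ``at most one vertex of $V_2$ spared'' gives $|V_2|-1=\frac{n-1}{2}$ --- that, and not an endpoint-indistinguishability obstruction, is how the paper reaches the ``otherwise'' branch. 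Conversely, in the branch $od(c)<id(c)$ with $(v_n,v_1)\in E(W_n)$ the paper spares \emph{two} vertices from the larger $V_2$, namely one generic vertex plus $v_n$, because $d(v_n,v_1)=1$ separates $v_n$ from the rest of $V_2$ and yields $|V_2|-2=\frac{n-3}{2}$; your phrase about the rim arc ``collapsing one coordinate's worth of ambiguity'' points in this direction, but your arithmetic ($\frac{n-1}{2}-1$) never engages with the larger layer. You need to compute $|V_2|$ in each regime before any of the counts come out right.

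The second, more serious divergence is at $n=5$. You propose to exhibit an oriented Hamiltonian path and invoke Theorem \ref{dim1} to conclude $dim(W_5)=1$, matching the displayed value $\frac{n-3}{2}=1$. The paper's own proof does the opposite: it asserts that none of the four $C_3$-simple orientations of the fan inside $W_5$ admits an oriented Hamiltonian path, concludes $dim(W_5)>1$ from Theorem \ref{dim1}, and then exhibits a $2$-element resolving set, i.e., it actually proves $dim(W_5)=2=\frac{n-1}{2}$, contradicting the theorem's own statement. So before writing this case you must determine which claim is true for the orientations in question rather than taking the stated value on faith; your Hamiltonian-path plan only works if such a path satisfying both conditions (i) and (ii) of Theorem \ref{dim1} really exists, which the paper's proof explicitly denies.
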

\begin{proof}
For $n=3$, it is easy to check that for all 4 possible $C_3$-simple orientations on $W_3$, we could apply Lemma \ref{dim1} to obtain $dim(W_3)=1$.

\noindent For $n=5$, it is obvious that there is no oriented Hamiltonian path in all 4 possible $C_3$-simple orientations on $W_5$, and by Lemma \ref{dim1}, $dim(W_5)>1$. Based on a $C_3$-simple orientation on $W_n$, we define a partition of $V(W_n)$: $V_0=\{c\}$, $V_1=\{v_i|d(c,v_i)=1\}$, and $V_2=\{v_i|d(c,v_i)=2\}$. Let $B$ be a $2$-subset of $V_2$ and $b\in B$, then in $r(v|B)$, $v\in V_1$, the coordinates related to the neighbors of $v$ are 1 and those related to the non-neighbors of $v$ are 4. Additionally, $r(c|B)=(2,2)$ and $r(v|B)=(3,3)$ for $v\in V_2\setminus B$. Therefore $dim(W_n) \leq 2$.

\noindent For $n \geq 7$, as in $n=5$ before, we define a partition $V_0, V_1, V_2$ of $V(W_n)$. We shall consider two cases separately. (i) If $od(c)>id(c)$ or $od(c)<id(c)$ and $(v_1,v_n)\in E(W_n)$ then $d(x,y)=2$ for $x\in V_2$, $y\in V_1$ and $d(x,y)=3$ for $x,y\in V_2$. Thus we could have at most one vertex in $V_2$ omitted from a resolving set. Thus $dim(W_n) \geq |V_2|-1$. (ii) If $od(c)<id(c)$ and $(v_n,v_1)\in E(W_n)$ then the distances will be the same as in case (i) except for $d(v_n,v_1)$, which obviously is 1. Thus at most one vertex in $V_2$ along with $v_n$ could be omitted from a resolving set and so $dim(W_n) \geq |V_2|-2$. For the upper bound, we shall the two afore-mentioned cases. (i) If $od(c)>id(c)$ or $od(c)<id(c)$ and $(v_1,v_n)\in E(W_n)$ then let $B$ be a subset of $V_2$ of cardinality $|V_2|-1$ and $b\in B$. Thus $d(v,b)=1 \ {\rm or} \ 4$, for $v\in V_1$, $d(c,b)=2$, and $d(v,b)=3$, for $v\in V_2\setminus B$. Therefore in $r(v|B)$, $v\in V_1$, the coordinates related to the neighbors of $v$ are 1 and those related to the non-neighbors of $v$ are 4. Additionally, $r(c|B)=(2,2,\ldots,2)$ and $r(v|B)=(3,3,\ldots,3)$ for $v\in V_2\setminus B$. In other words, $B$ resolves $W_n$ and $dim(W_n) \leq |V_2|-1$. (ii) If $od(c)<id(c)$ and $(v_n,v_1)\in E(W_n)$ then let $B$ be a subset of $V_2$ of cardinality $|V_2|-2$, where $v_n \notin B$, and $b\in B$. We then obtain the same representations as in case (i) except for $r(v_n|B)$ where the coordinate related to $v_1$ is 1 and the coordinates related to the other vertices are 3. Thus $B$ resolves $W_n$ and $dim(W_n) \leq |V_2|-2$. This completes the proof.
\end{proof}

\subsection{Directed Metric Dimension of Oriented Fans}

A \emph{fan}, $F_{m,n}$, is defined as $\overline{K_m}+P_n$, where the vertices $c_1,c_2,\ldots,c_m$ in $\overline{K_m}$ is called the \emph{centers}.

\begin{theorem}
If $F_{m,n}$ is a fan admitting $C_3-$simple orientation then
\[dim(F_{m,n})=\left\{\begin{array}{ll}
                            1 & \hbox{for $m=1$ and $n=2,3,4$}; \\
                            m-1, & \hbox{for $m\geq 2$ and $n=2$}; \\
                            m, & \hbox{for $m\geq 2$ and $n=3,4$}; \\
                            m+1, & \hbox{for $m\geq 2$ and $n=5$}; \\
                            \frac{n}{2}+m-2, & \hbox{for $n$ even, $n\geq 6$}; \\
                            \frac{n-1}{2}+m-2, & \hbox{for $n$ odd, $n\geq 7$, and $od(c_i)>id(c_i), \forall i$}; \\
                            \frac{n-1}{2}+m-1, & \hbox{for $n$ odd, $n\geq 7$, and $od(c_i)<id(c_i), \forall i$}.
                      \end{array}
               \right.\]
\end{theorem}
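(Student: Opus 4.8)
The plan is to treat $F_{m,n}=\overline{K_m}+P_n$ as a multi-centered wheel whose rim is an open path rather than a cycle, and to recycle the machinery of Theorems \ref{evenWn} and \ref{oddWn}. First I would dispose of the base rows of the table ($n\in\{2,3,4,5\}$, together with the one-dimensional cases $m=1$) by direct inspection. For every entry equal to $1$ I would exhibit the oriented Hamiltonian path demanded by Theorem \ref{dim1} and check condition (ii); for the small-$m\geq 2$ entries ($m-1$ when $n=2$, $m$ when $n=3,4$, $m+1$ when $n=5$) I would simply display a resolving set of the claimed cardinality and list the finitely many representations, using the undirected values of Theorem \ref{F1n} as a sanity check.

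For the generic rows ($n\geq 6$ even, $n\geq 7$ odd) I would fix an explicit $C_3$-simple orientation generalizing the arc set of Lemma \ref{simpleWn} to $m$ centers, and then introduce the distance partition $V_0=\{c_1,\ldots,c_m\}$, $V_1=\{v_i\mid d(c,v_i)=1\}$, $V_2=\{v_i\mid d(c,v_i)=2\}$ exactly as in the wheel proofs. The heart of the argument is two twin-type observations. First, because the centers form an independent set each joined to the whole path in the same oriented pattern, no non-center landmark can separate two centers, so every resolving set must contain at least $m-1$ of them. Second, as in Theorem \ref{evenWn}, the rim vertices satisfy $d(x,y)=3$ for $x,y\in V_2$ and $d(x,y)=2$ for $x\in V_2,\ y\in V_1$, so at most one vertex of $V_2$ may be omitted. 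Since the center class and the $V_2$ class are disjoint, these bounds add, yielding $dim(F_{m,n})\geq (m-1)+(|V_2|-1)$; substituting $|V_2|=\frac{n}{2}$ (even case) or $|V_2|=\frac{n-1}{2}$ (odd case) gives the tabulated lower bounds. For the matching upper bound I would take $B=\{c_1,\ldots,c_{m-1}\}\cup B'$ with $B'\subseteq V_2$ of the minimum admissible size, and verify that the $m-1$ chosen centers pin down all $m$ centers (the last by elimination) while the landmarks of $B'$ replay the rim computation of Theorem \ref{evenWn}, assigning coordinate $1$ or $4$ to neighbors/non-neighbors in $V_1$, the constant vector $(2,\ldots,2)$-type pattern to the centers, and $(3,\ldots,3)$ to the undropped $V_2$ vertices, so that all representations are distinct.

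The main obstacle I anticipate is the odd-case dichotomy encoded by the hypotheses $od(c_i)>id(c_i)$ versus $od(c_i)<id(c_i)$ for \emph{all} $i$, which is exactly what separates the correction $+m-2$ from $+m-1$. As in Theorem \ref{oddWn}, the sign of $od(c_i)-id(c_i)$ governs the orientation-dependent distances along the open end of the path and hence whether one further vertex of $V_2$ can be spared from the resolving set. The delicate point is that here this behavior must hold \emph{simultaneously} across all $m$ centers, and I expect the most careful bookkeeping to be in showing that the distances $d(v_i,c_j)$ and $d(c_i,c_j)$ are genuinely uniform in the center index, so that the twin structure of step two and the one-vertex savings of Theorem \ref{oddWn} transfer intact rather than degrading center by center; I would organize this as a short case analysis mirroring cases (i) and (ii) of the proof of Theorem \ref{oddWn}.
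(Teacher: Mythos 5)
Your overall framework coincides with the paper's: the distance partition $V_0,V_1,V_2$, the two twin-type observations giving the lower bound $(m-1)+(|V_2|-1)$, and the upper-bound set consisting of $m-1$ centers together with all but one vertex of $V_2$. The base cases are also handled in the same spirit (the paper actually folds $n=3,4,5$ with $m\geq 2$ into the general formula rather than inspecting them separately, but that is immaterial).

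The one place where your plan would fail is the odd-$n$ dichotomy. You propose to explain the gap between $\frac{n-1}{2}+m-2$ and $\frac{n-1}{2}+m-1$ by transplanting case (ii) of Theorem \ref{oddWn}, i.e., by asking whether one further vertex of $V_2$ can be spared from the resolving set. That mechanism cannot operate here: in Theorem \ref{oddWn} it hinged on the rim arc $(v_n,v_1)$, which does not exist in a fan because the rim is an open path; moreover, sparing an additional landmark would \emph{lower} the dimension, whereas the case $od(c_i)<id(c_i)$ is the one with the \emph{larger} value. The actual source of the dichotomy is simply the size of $V_2$. A $C_3$-simple orientation forces each rim edge $v_iv_{i+1}$ to determine the orientation of all spokes at $v_i$ and $v_{i+1}$ uniformly over the centers, so the rim vertices alternate between $V_1$ and $V_2$; for odd $n$ either both endpoints $v_1,v_n$ lie in $V_1$ (the case $od(c_i)>id(c_i)$, giving $|V_2|=\frac{n-1}{2}$) or both lie in $V_2$ (the case $od(c_i)<id(c_i)$, giving $|V_2|=\frac{n+1}{2}$). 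The single formula $dim(F_{m,n})=m+|V_2|-2$ then produces every generic row at once; your stated substitution $|V_2|=\frac{n-1}{2}$ for both odd subcases would give the wrong answer for the second one, and the compensating argument you sketch points in the wrong direction. Replace that step with the computation of $|V_2|$ and the rest of your proposal goes through as in the paper.
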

\begin{proof} For $m=1$ and $n=2,3,4$, all $F_{m,n}$s have a directed Hamiltonian path which satisfies the premises of Theorem \ref{dim1}. (See Figure \ref{Gam3.5}).
      \begin{figure}[h!]
      \begin{center}
      % Requires \usepackage{graphicx}
      \includegraphics[width=11cm]{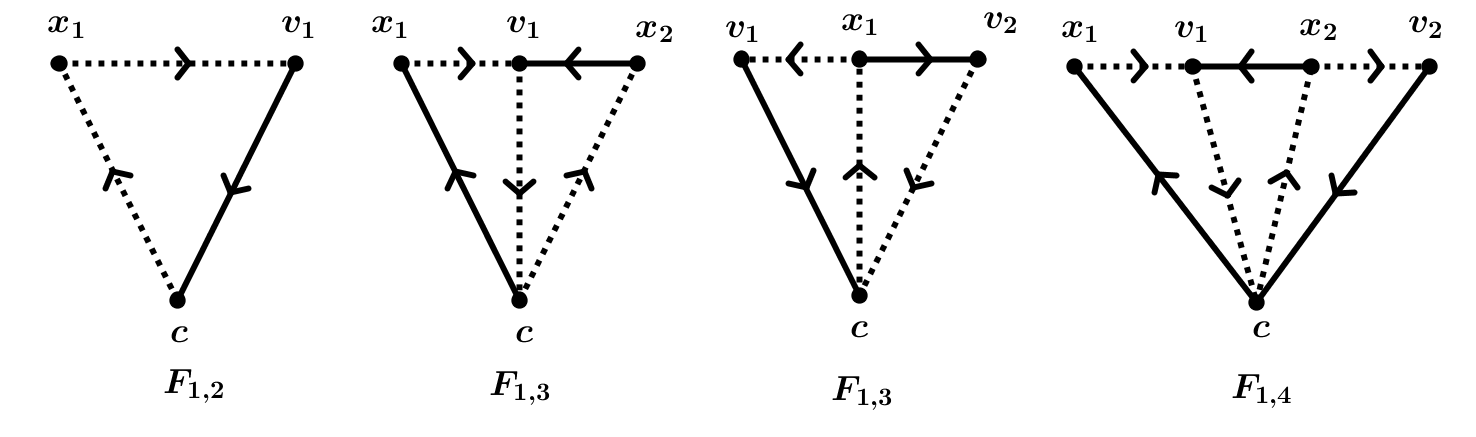}\\
      \caption{All possible $C_3$-simple orientations of the fan $F_{1,n}, n=2,3,4$. (A Hamiltonian path is depicted in dotted line.)}
      \label{Gam3.5}
      \end{center}
      \end{figure}

\noindent For $m\geq 2$ and $n=2$, we have $d(c_i,v)=d(c_j,v)$ for $v\in P_2$ and $d(c_i,c_j)=3 \ \forall i\neq j$. Thus at most one center vertex could be omitted from a resolving set and so $dim(F_{m,n})\geq m-1$. With $B=\{c_i|1\leq i\leq m-1\}$, it is easy to see that each vertex has distinct representation with respect to $B$, and so $dim(F_{m,2})\leq m-1$.

\noindent For the rest of the cases, based on a $C_3$-simple orientation on $F_{m,n}$, we define a partition of $V(F_{m,n})$: $V_0=\{c_1,c_2,\ldots,c_m\}$, $V_1=\{v_i|d(c,v_i)=1\}$, and $V_2=\{v_i|d(c,v_i)=2\}$. For $i=1,2, \ldots, m$, $d(c_i,v)=d(c_j,v)$ for $v\in V_1 \cup V_2$ and $d(c_i,c_j)=3 \ \forall i\neq j$; thus at most one center vertex could be omitted from a resolving set. Moreover, we have $d(x,y)=2$ for $x\in V_2$, $y\in V_1$ and $d(x,y)=3$ for $x,y\in V_2$, and so we could have at most one vertex in $V_2$ omitted from a resolving set. Thus $dim(F_{m,n}) \geq (m-1) + (|V_2|-1) = m+|V_2|-2$. For the upper bound, let $B_1$ be a subset of $V_0$ of cardinality $m-1$ and $B_2$ be a subset of $V_2$ of cardinality $|V_2|-1$. For $b_1\in B_1$, we have $d(c_i,b_1)=3$, for $c_i\in V_0\setminus B_1$,  $d(v,b_1)=2$, for $v\in V_1$, and $d(v,b_1)=1$, $v\in V_2$. For $b_2\in B_2$, we have $d(c_i,b_2)=2$, $d(v,b_2)=1 \ {\rm or} \ 4$, for $v\in V_1$, and $d(v,b_2)=3$, for $v\in V_2\setminus B_2$. Let $B=B_1 \cup B_2$, then in $r(v|B)$, $v\in V_1$, the coordinates related to the centers are 2, to the neighbors of $v$ are 1, and to the non-neighbors of $v$ are 4. Additionally, $r(c_i|B)=(3,3,\ldots,3,2,2,\ldots,2)$ for $c_i\in V_0\setminus B_1$ and $r(v|B)=(1,1,\ldots,1,3,3,\ldots,3)$ for $v\in V_2\setminus B$. Therefore $B$ resolves $F_{m,n}$ and $dim(F_{m,n}) \leq m+|V_2|-2$, and we obtain $dim(F_{m,n}) = m+|V_2|-2$. Depends on parity of $n$ and degrees of the centers, $|V_2|$ is either $\frac{n}{2}$, or $\frac{n-1}{2}$, or $\frac{n-1}{2}+1$. Substituting $|V_2|$ with appropriate number proves the theorem.
\end{proof}

\subsection{2-Dimensional Oriented Wheels and Fans}

Here we shall construct an orientation on $W_n$ and $F_{1,n}$ in such a way that the directed metric dimension is 2.

\begin{theorem}
For $n\geq 3$, there exists an orientation on the wheel $W_n$ such that $dim(W_n)=2$.
\end{theorem}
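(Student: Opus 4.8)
The plan is to prove the statement constructively: exhibit one explicit orientation of $W_n$ and then determine $dim$ by an out-distance twin argument for the lower bound together with a single explicit two-element resolving set for the upper bound. Concretely, I would orient the rim as a directed cycle $v_1 \to v_2 \to \cdots \to v_n \to v_1$, make the two spokes at $v_1$ and $v_2$ point outward, $(c,v_1)$ and $(c,v_2)$, and orient every remaining spoke into the centre, $(v_i,c)$ for $3\le i\le n$. Strong connectivity is immediate: from $c$ one reaches $v_2$ and hence the entire rim, while each $v_i$ reaches $c$ by running forward along the rim to the nearest in-spoke. A short computation then gives the closed forms $d(c,v_j)=j-1$ and $d(v_1,v_j)=j-1$ for $2\le j\le n$, together with $d(v_i,c)=1$ for $i\ge 3$, $d(v_2,c)=2$, and $d(v_1,c)=3$; every remaining distance follows by comparing the forward rim length $(j-i)\bmod n$ with the detour $d(v_i,c)+d(c,v_j)$ through the centre.

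For the lower bound I would exploit that $c$ and $v_1$ are \emph{out-distance twins}: by the formulas above, $d(c,x)=d(v_1,x)$ for every $x\in\{v_2,\dots,v_n\}$. Consequently, if a resolving set $B$ avoided both $c$ and $v_1$, then $r(c\,|\,B)=r(v_1\,|\,B)$, which is impossible; so every resolving set meets $\{c,v_1\}$, and in particular no single landmark outside this pair can work. It then remains only to rule out the two candidates themselves. The landmark $c$ fails because $d(v_i,c)=1$ for all $i\ge 3$, so $v_3$ and $v_4$ collide; the landmark $v_1$ fails because $d(c,v_1)=d(v_n,v_1)=1$ (and, for larger $n$, $d(v_i,v_1)=2$ for all $3\le i\le n-1$). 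This yields $dim(W_n)\ge 2$ for $n\ge 4$ without any appeal to Theorem \ref{dim1}, which is the main advantage of phrasing the lower bound through twins rather than through the Hamiltonian-path criterion.

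For the upper bound I would take $B=\{v_1,v_n\}$ and read the representations directly off the distance formulas: $r(c\,|\,B)=(1,n-1)$, $r(v_1\,|\,B)=(0,n-1)$, $r(v_2\,|\,B)=(3,n-2)$, $r(v_i\,|\,B)=(2,n-i)$ for $3\le i\le n-1$, and $r(v_n\,|\,B)=(1,0)$. The second coordinates $n-i$ separate $v_3,\dots,v_{n-1}$ among themselves; the first coordinates $0$ and $3$ isolate $v_1$ and $v_2$; the vertices $v_i$ ($i\ge3$) are told from $c$ and $v_n$ by the first coordinate; and $c$ is distinguished from $v_n$ by the second coordinate. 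Hence $B$ resolves $W_n$ and $dim(W_n)\le 2$, giving equality for all $n\ge 4$. This half of the argument is purely mechanical once the distance table is established.

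The genuinely delicate point, and the step I expect to need the most care, is making the lower bound uniform in $n$ and handling the smallest case. For $n=3$ the wheel is $K_4$, every orientation is a tournament, and the construction above degenerates (there $c$ becomes a resolving vertex), so $n=3$ cannot be folded into the general argument and must be scrutinised on its own; I would therefore state and verify the generic construction for $n\ge 4$, check $n=4$ by hand (where $B=\{v_1,v_4\}$ already produces the representations $(1,3),(0,3),(3,2),(2,1),(1,0)$), and treat $n=3$ as an explicit exceptional case. The crux is thus confirming that exactly the pair $(c,v_1)$—and no other accidental coincidence of distances—controls the dimension for every $n\ge 4$; the twin observation is precisely what makes this uniform and keeps the whole proof short.
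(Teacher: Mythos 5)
Your construction for $n\ge 4$ is correct and is a genuinely different route from the paper's. The paper handles $3\le n\le 7$ by citing its $C_3$-simple results (Theorems \ref{evenWn} and \ref{oddWn}) and, for $n\ge 8$, grafts extra out-spokes onto a $C_3$-simple $F_{1,7}$ and checks $B=\{v_2,v_4\}$; your single uniform orientation (directed rim plus the two out-spokes $(c,v_1),(c,v_2)$) with the out-distance-twin lower bound and $B=\{v_1,v_n\}$ is cleaner, self-contained, and avoids any appeal to Theorem \ref{dim1}. I checked your distance table and the resulting representations $(1,n-1)$, $(0,n-1)$, $(3,n-2)$, $(2,n-i)$, $(1,0)$; they are all correct and pairwise distinct, so the argument goes through for every $n\ge 4$.

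The gap is $n=3$, which you flag but never close, and it is not a formality. Under the paper's standing convention that $D$ is strongly connected, $W_3=K_4$ admits (up to isomorphism) exactly one strong orientation, namely the unique strong tournament on four vertices, and that tournament has directed metric dimension $1$: it has a vertex at directed distances $0,1,2,3$ from the others, so a single landmark resolves it. Hence no strongly connected orientation of $W_3$ has dimension $2$, and the case cannot simply be "treated as an explicit exceptional case" within the strong setting. You would have to pass to a non-strong orientation whose dimension is nevertheless defined --- for instance the tournament with a dominant vertex $a$ and arcs $a\to b$, $a\to c$, $a\to d$, $b\to c$, $c\to d$, $d\to b$, where $B=\{b,c\}$ resolves, no singleton does, and all relevant distances are finite --- or else restrict the statement to $n\ge 4$. (The paper itself is shaky here: its proof of Theorem \ref{oddWn} asserts $dim(W_3)=1$ for every $C_3$-simple orientation, yet its proof of the present theorem claims $n=3$ is covered by that theorem.) As written, your proof establishes the statement only for $n\ge 4$.
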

\begin{proof} For $3 \leq n \leq 7$ we have a $C_3$-simple orientation for $W_n$ such that $dim(W_n)=2$ as in Theorem \ref{evenWn} and \ref{oddWn}. For $n\geq 8$, consider a subgraph $F_{1,7}$ with vertex-set $\{c,v_1,v_2,\ldots,v_7\}$. Apply a simple$-C_3$ orientation with $od(c)>id(c)$ on the $F_{1,7}$. We then apply an orientation such that there exists an oriented path from $v_1$ to $v_7$, and lastly, for $v\in V(W_n)\setminus V(F_{1,7})$, set an arc from $c$ to $v$.

\noindent The next step is to determine the directed dimension of $W_n$ under such an orientation. Choose $B=\{v_2,v_4\}$. Then the representations of all vertices are $r(c|B)=(2,2), r(v_1|B)=(1,4), r(v_2|B)=(0,3), r(v_3|B)=(1,1), r(v_4|B)=(3,0), r(v_5|B)=(4,1), r(v_6|B)=(3,3), r(v_7|B)=(4,4)$, and $r(v_i|B)=(i-7+4,i-7+4)$, for $8\leq i\leq n$. Since $W_n$ does not admit an oriented Hamiltonian path then $dim(W_n)=2$.
\end{proof}

\begin{theorem}
For $n\geq 3$, there exists an orientation on the fan $F_{m,n}$ such that $dim(F_{m,n})=2$.
\end{theorem}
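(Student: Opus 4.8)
The plan is to give, for each fan $F_{m,n}$ with $n\geq 3$, one explicit orientation together with a resolving set $B$ of size $2$, and then to rule out directed dimension $1$ by Theorem \ref{dim1}. The construction follows the template of the preceding wheel proof: fix a directed backbone on the path and attach the $m$ centers so that their distances to the two landmarks are forced to be pairwise distinct.

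First I would fix the backbone. Distinguish one center $c_1$ and orient the subfan on $\{c_1,v_1,\ldots,v_n\}$ exactly as in the preceding wheel construction (a simple-$C_3$ orientation with $od(c_1)>id(c_1)$ on a seven-vertex block, the path directed as $v_1\to v_2\to\cdots\to v_n$, and a return route through $c_1$), so that this $F_{1,n}$ is strongly connected and already has directed dimension $2$ with $B=\{v_2,v_4\}$ resolving it. This assigns to every path vertex $v_k$ a representation $r(v_k|B)$, and these $n$ pairs are pairwise distinct.

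Next I would attach the remaining centers $c_2,\ldots,c_m$. For each $c_j$ I orient every incident edge into $c_j$ except for a single out-arc $c_j\to v_{f(j)}$; then $d(c_j,v_2)=1+d(v_{f(j)},v_2)$ and $d(c_j,v_4)=1+d(v_{f(j)},v_4)$, so that $r(c_j|B)$ is the shift of $r(v_{f(j)}|B)$ by $(1,1)$. Choosing the targets $f(2),\ldots,f(m)$ to hit distinct path positions makes the center representations pairwise distinct, and the uniform $(1,1)$-shift keeps them off the path-vertex and $c_1$ representations once the finitely many shift-coincidences are avoided. Strong connectivity is then immediate: each newly attached $c_j$ has in-arcs from path vertices and one out-arc, so it lies on a directed cycle together with the already-strong backbone. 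A direct check then shows that $B=\{v_2,v_4\}$ separates all $m+n$ vertices, giving $dim(F_{m,n})\leq 2$; and $dim(F_{m,n})\geq 2$ follows from Theorem \ref{dim1}, since the arcs running from the path vertices into the centers force a backward arc relative to any Hamiltonian ordering, so condition (ii) fails for every candidate terminal vertex.

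The hard part will be the assignment step. Because each center can only send arcs to path vertices, every center representation is confined to the set $\{(1,1)+r(v_k|B)\}$ (or, if a center is given several out-arcs, to a coordinatewise minimum of such points), so the number of representations available to the centers is controlled by the number of distinct distance-pairs produced by the backbone. The delicate point is therefore to orient the backbone so that enough distinct pairs $r(v_k|B)$ arise and then to choose the out-arc targets $f(j)$ injectively while dodging the collisions with the path vertices and with $c_1$; this is where the case analysis on the parity of $n$, and on the size of $m$ relative to $n$, will concentrate.
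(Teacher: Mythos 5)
There is a genuine gap, and it sits exactly where you flagged ``the hard part.'' First, a reading issue that matters: despite the $F_{m,n}$ in the statement, the paper's proof only ever constructs orientations of $F_{1,n}$ (explicit arc sets for $F_{1,3}$ and $F_{1,4}$, and for $n\geq 5$ an $F_{1,4}$-block with arcs $c\to v_i$ for $5\leq i\leq n$, tail $v_n\to v_{n-1}\to\cdots\to v_5\to v_4$, and resolving set $B=\{v_2,v_3\}$); for $m=1$ your backbone is essentially that construction, up to your mis-description of the block (a uniformly directed path $v_1\to v_2\to\cdots\to v_n$ with return through the center does \emph{not} work --- e.g.\ $v_5$ and $v_6$ would get equal representations --- the paper's blocks are zigzag-oriented). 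More seriously, for general $m$ the statement you are trying to prove is false, so no assignment step can close your argument: in any orientation of $F_{m,3}$ for which representations are defined, each center $c_j\notin B$ has out-arcs only to path vertices, so $r(c_j|B)$ is determined by its out-neighborhood $S_j\subseteq\{v_1,v_2,v_3\}$ via $d(c_j,b)=1+\min_{v\in S_j}d(v,b)$; there are at most $7$ nonempty choices of $S_j$, hence at most $7$ distinct center representations, and so $dim(F_{m,3})>2$ once $m\geq 10$.

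Second, even where dimension $2$ is achievable, your single-out-arc scheme destroys the backbone it relies on. Orienting every edge at $c_j$ inward except $c_j\to v_{f(j)}$ creates the two-step shortcut $v_i\to c_j\to v_{f(j)}$ for \emph{every} path vertex $v_i$ with $i\neq f(j)$, so $d(v_i,b)\leq 2+d(v_{f(j)},b)$ for each landmark $b$. Your claim that the path vertices keep the representations ``assigned by the backbone'' is therefore false unless $f(j)$ sits at the far end of the long tail; if $f(j)$ is near the landmarks, all tail distances collapse to a bounded range and the path representations collide wholesale. But along the tail the representations increase by exactly $(1,1)$ per step, so $(1,1)+r(v_k|B)=r(v_{k+1}|B)$ for every tail index $k<n$: among the harmless far-end targets, essentially only $f(j)=n$ yields a representation not already occupied by a path vertex. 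So the obstruction is not ``finitely many shift-coincidences to dodge'' by choosing $f$ injectively --- with this backbone the scheme attaches at most a constant number of extra centers, independent of the freedom in $f$, and by the counting argument above no backbone can rescue it for small $n$ and large $m$. The correct repair is to prove the theorem as the paper implicitly does, for $m=1$ only.
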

\begin{proof} For $n=3$, let the arc set of $F_{1,3}$ be $A=\{(v_1,v_2), (v_3,v_2), (v_1,c),$ $(v_2,c), (c,v_3)\}$. It is clear that $B=\{v_2,v_3\}$ is a resolving set. Since such orientation does not admit an oriented Hamiltonian path, we have $dim(F_{1,3})=2$. For $n=4$, let the arc set of $F_{1,4}$ be $A=\{(v_1,v_2),(v_3,v_2),$ $(v_3,v_4),(v_1,c),(v_2,c),(c,v_3),(v_4,c)\}$. Thus we have $B=\{v_2,v_3\}$ as a resolving set and, since $F_{1,4}$ does not admit an oriented Hamiltonian path, we have $dim(F_{1,4})=2$.

\noindent for $n\geq 5$, consider a subgraph $F_{1,4}$ with vertex-set $\{c,v_1,v_2,v_3,v_4\}$. Give $F_{1,4}$ an orientation as in the previous case. Then apply an orientation such that there exists an oriented path from $v_n$ to $v_4$ and, lastly, for $v\in V(W_n)\setminus V(F_{1,7})$, set an arc from $u$ to $v$. Now, let $B=\{v_2,v_3\}$. The representation of all vertices are $r(c|B)=(2,1), r(v_1|B)=(1,2), r(v_2|B)=(0,2), r(v_3|B)=(0,1), r(v_4|B)=(3,2)$, and $r(v_i|B)=(i-4+3,i-4+2)$, for $5 \leq i\leq n$. Since $F_{1,n}$ does not admit the directed Hamiltonian path, then $dim(F_{1,n})=2$.
\end{proof}

\begin{figure}[h!]
\begin{center}
  % Requires \usepackage{graphicx}
  \includegraphics[width=11cm]{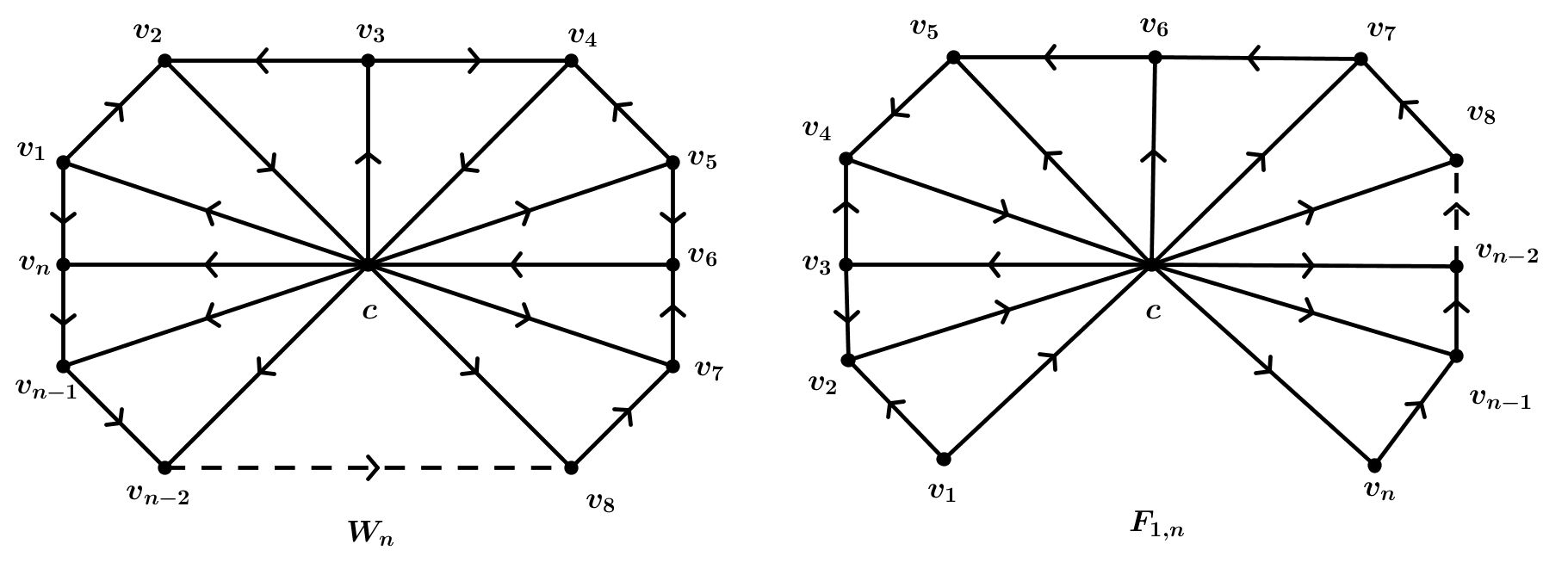}\\
  \caption{The 2-dimensional oriented $W_n$ and $F_{1,n}$.}
  \label{Gam3.14}
\end{center}
\end{figure}

\noindent So far we have obtained directed dimension for wheels and fans admitting $C_3$-simple orientations and construct $2$-dimensional oriented wheels and fans. A question arisen after this study is whether the $C_3$-simple orientation gives the maximum possible dimension, i.e. the $ORD(W_n)$ and $ORD(F_n)$, or is there an orientation which provides larger dimension? Another interesting question is whether all values up to the $ORD(W_n)$ or $ORD(F_n)$ are achievable.

\begin{problem}
For $n\geq 3$, determine $ORD(W_n)$ and $ORD(F_{m,n})$.
\label{ord}
\end{problem}
\begin{problem}
Let $k_W$ be an integer in $[2,ORD(W_n)]$ and $k_F$ be an integer in $[2,ORD(F_{m,n})]$.
Does there exist an orientation on $W_n$ such that $dim(W_n)=k_W$? Similarly, does there exist an orientation on $F_{m,n}$ such that $dim(F_{m,n})=k_F$?
\label{allvalues}
\end{problem}

\subsection{Directed Metric Dimension of Amalgamation of Oriented Cycles}

Let $\mathcal{C}=\{C_{n_i}|i=1,2,...,t\}$ be a collection of $t$ strongly oriented cycles. A \emph{path amalgamation} of cycles in $\mathcal{C}$, denoted by $P_x-Amal\{C_{n_i}\}_{i=1}^t$, where $2 \leq x \leq \min \{|V(C_{n_i})|\}-1$, is the graph constructed by joining the cycles in $\mathcal{C}$ on a
common path or order $x$ called the \emph{terminal path}. We shall denote the vertices is $P_x-Amal\{C_{t_i}\}_{i=1}^n$ as follow: $P_x=v_1v_2\ldots v_{x-1}v_x$ and $C_{n_i}=v_1v_2\ldots v_xv_{x+1}^iv_{x+2}^i\ldots v_{n_i-1}^iv_{n_i}^iv_1$. We shall use the notation $P_{n_i}$ for the path $v_{x+1}^iv_{x+2}^i\ldots v_{n_i-1}^iv_{n_i}$.

%\begin{figure}[!h]
%\begin{center}
  % Requires \usepackage{graphicx}
  %\includegraphics[width=6cm]{amalCnt}\\
  %\caption{The Oriented $Amal\{C_{n_i}\}_{i=1}^t$}\label{Gam4.1}
%\end{center}
%\end{figure}
%
%\begin{figure}[h!]
%\begin{center}
  % Requires \usepackage{graphicx}
  %\includegraphics[width=8cm]{edgeamalC_n}\\
  %\caption{The Oriented $edge-Amal\{C_{n_i}\}_{i=1}^t$}\label{Gam4.2}
%\end{center}
%\end{figure}
%
\begin{figure}[!h]
\begin{center}
  % Requires \usepackage{graphicx}
  \includegraphics[width=8cm]{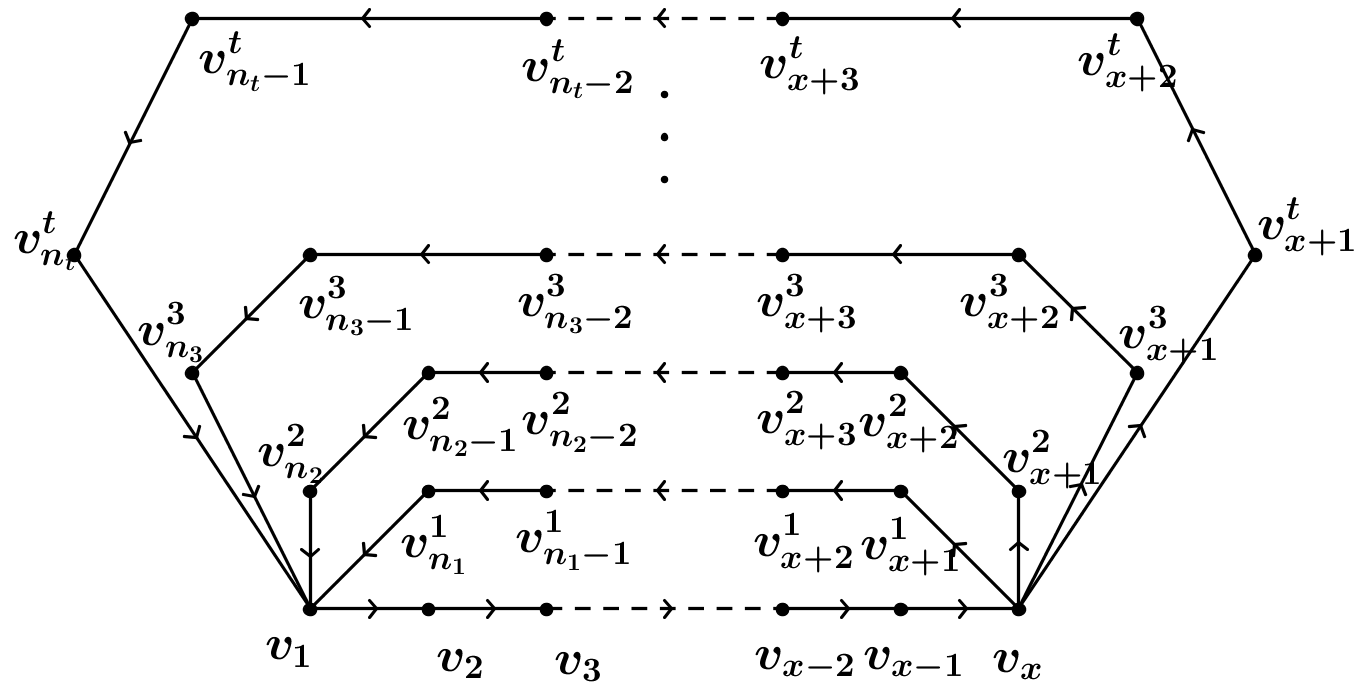}\\
  \caption{A $P_x-Amal\{C_{n_i}\}_{i=1}^t$ of strongly oriented cycles.}
  \label{Gam4.3}
\end{center}
\end{figure}

\begin{theorem}
Let $P_x-Amal\{C_{n_i}\}_{i=1}^t$ be a path amalgamation of $t$ strongly oriented cycles then $dim(P_x-Amal\{C_{n_i}\}_{i=1}^t)=t-1.$
\end{theorem}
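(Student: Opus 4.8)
The plan is to prove the two bounds $dim \geq t-1$ and $dim \leq t-1$ separately, exploiting the rigidity forced by the strong orientations. The crucial structural fact I would record first is that, because each $C_{n_i}$ is a directed cycle sharing the directed terminal path $v_1 \to v_2 \to \cdots \to v_x$, every private vertex $v_{x+q}^i$ has out-degree $1$; in particular the last private vertex $v_{n_i}^i$ of each cycle has $v_1$ as its unique out-neighbour (and the constraint $x \leq n_i-1$ guarantees each cycle really has at least one private vertex, so these $t$ vertices are well-defined and lie off $P_x$). Consequently every directed path leaving $v_{n_i}^i$ begins with the arc $(v_{n_i}^i,v_1)$, so that
\[ d(v_{n_i}^i, w) = 1 + d(v_1, w) \quad \text{for every } w \neq v_{n_i}^i. \]

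For the lower bound I would apply this identity to the set $S=\{v_{n_1}^1,\ldots,v_{n_t}^t\}$. For a landmark $b\notin S$ the values $d(v_{n_i}^i,b)$ are all equal (to $1+d(v_1,b)$), while a landmark $b=v_{n_k}^k\in S$ assigns $0$ to $v_{n_k}^k$ but the common value $n_k$ to every other member of $S$. Hence if two vertices $v_{n_i}^i,v_{n_j}^j$ both lie outside $B$ their representations coincide, so a resolving set must contain all but at most one vertex of $S$; this already gives $dim \geq t-1$.

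For the upper bound I would exhibit the explicit candidate $B=\{v_{n_1}^1,v_{n_2}^2,\ldots,v_{n_{t-1}}^{t-1}\}$ of size $t-1$ (the last private vertex of every cycle except $C_{n_t}$). The out-degree-$1$ property makes all relevant distances computable in closed form, since the only way to leave a cycle is to run forward to $v_1$ and then along $P_x$ to $v_x$: a short case analysis should yield, for each landmark $v_{n_i}^i$, the distances $d(v_j,v_{n_i}^i)=n_i-j$ for a terminal-path vertex $v_j$, $d(v_{x+q}^i,v_{n_i}^i)=n_i-x-q$ for a vertex in the same cycle, and $d(v_{x+q}^{i'},v_{n_i}^i)=n_{i'}+n_i-x-q$ for a vertex in a different cycle $i'\neq i$. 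I would then read off the representations, noting in particular that $r(v_{n_t}^t\mid B)=(n_1,\ldots,n_{t-1})$ has all positive entries whereas each landmark carries a unique coordinate equal to $0$.

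Finally I would verify that these representations are pairwise distinct. Along $P_x$, and within any single cycle, the relevant coordinate strictly decreases as one moves forward, so no two vertices of a common part collide; for vertices in two different parts I would argue that equality in one suitably chosen coordinate forces a position index to exceed its cycle's length (equality between $v_{x+q}^i$ in cycle $i$ and $v_{x+q'}^{i'}$ in cycle $i'$ in coordinate $i$ forces $q'=q+n_{i'}>n_{i'}-x$, which is impossible). I expect this cross-part collision check to be the main technical obstacle, but the out-degree-$1$ rigidity — which removes every shortcut ambiguity and makes each distance a forced, monotone function of position — is exactly what keeps the verification routine, and combining the two bounds yields $dim(P_x\text{-}Amal\{C_{n_i}\}_{i=1}^t)=t-1$.
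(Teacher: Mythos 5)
Your proposal is correct and follows essentially the same route as the paper: the same observation that the vertices $v_{n_i}^i$ are mutually unresolvable by any landmark other than themselves (giving $dim \geq t-1$), and the same witness set $B=\{v_{n_1}^1,\ldots,v_{n_{t-1}}^{t-1}\}$ with the same distance formulas for the upper bound. Your version merely makes explicit the out-degree-one rigidity and the pairwise-distinctness check that the paper leaves implicit.
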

\begin{proof} If an arbitrary set $S$ does not contain a vertex in $P_{n_i}$ and $P_{n_j}$ where $i\neq j$, then $r(v_{n_i}^i|S)=r(v_{n_j}^j|S)$ and so $S$ is not a resolving set. Thus each resolving set needs to contain a vertex from each pair of $P_{n_i}$ and $P_{n_j}$. This results in $dim(P_x-Amal\{C_{n_i}\}_{i=1}^t)\geq t-1$.

\noindent Next, consider $B=\{v_{n_1}^1,v_{n_2}^2,...,v_{n_{t-1}}^{t-1}\}$. For a vertex $u\in P_{n_i}$, we have $d(u,v_{n_i}^i)\leq n_i -1$, while for a vertex $u\in P_{n_j}, j\neq i$, $d(u,v_{n_i}^i)=d(u,v_1)+(x-1)+(n_i-x)=d(u,v_1)+n_i-1$ and for a vertex $u\in P_x$, $d(u,v_{n_i}^i)=d(u,v_x)+n_i-x$. This guarantees that each vertex will have distinct representation with respect to $B$. Thus $B$ is a resolving set and $dim(P_x-Amal\{C_{n_i}\}_{i=1}^t)\leq t-1$. This completes the proof.
% The $(t-1)-$tuple representation of $v\in V(\bigcup_{i=1}^t P_{n_i-x})\setminus B$ with respect to $B$ are:\\
%For $P_{n_1-x}$,
%$$r(v_j^1|B)=(n_1-j,n_1+n_2-j,n_1+n_3-j,...,n_1+n_{t-1}-j)$$
%where $j=x+1,x+2,...,n_1-1.$\\
%For $P_{n_2-x}$,
%$$r(v_j^2|B)=(n_1+n_2-j,n_2-j,n_2+n_3-j,...,n_2+n_{t-1}-j)$$
%where $j=x+1,x+2,...,n_2-1.$\\
%For $P_{n_3-x}$,
%$$r(v_j^3|B)=(n_1+n_3-j,n_2+n_3-j,n_3-j,...,n_3+n_{t-1}-j)$$
%where $j=x+1,x+2,...,n_3-1.$\\
%and so forth, for $P_{n_{t-1}-x}$,
%$$r(v_j^{t-1}|B)=(n_1+n_{t-1}-j,n_2+n_{t-1}-j,n_3+n_{t-1}-j,...,n_{t-1}-j)$$
%where $j=x+1,x+2,...,n_{t-1}-1.$\\
%and for $P_{n_t-x}$,
%$$r(v_j^t|B)=(n_1+n_t-j,n_2+n_t-j,n_3+n_t-j,...,n_t+n_{t-1}-j).$$
%where $j=x+1,x+2,...,n_t.$\\
%Then for $k=1,2,...,x,$ $$r(v_k^-|B)=(n_1=k,n_2-k,n_3-k,...,n_{t-1}-k).$$
%Since $B$ resolves $path-Amal\{C_{n_i}\}_{i=1}^t,$ then, $dim(path-Amal\{C_{n_i}\}_{i=1}^t)\leq t-1$.
%Therefore, $dim(path-Amal\{C_{n_i}\}_{i=1}^t)= t-1$.
\end{proof}

\noindent If the path $P_x$ is of order 1 then we have a vertex amalgamation of strongly oriented cycles and if it is of order 2 then we have an edge amalgamation of strongly oriented cycles. Thus the following corollaries hold.
\begin{corollary}
Let $Amal\{C_{n_i}\}_{i=1}^t$ be a vertex amalgamation of $t$ strongly oriented cycles then $dim(Amal\{C_{n_i}\}_{i=1}^t)=t-1.$
\end{corollary}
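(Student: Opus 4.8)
The plan is to obtain this corollary as the degenerate case $x=1$ of the preceding theorem, in which the terminal path $P_x$ collapses to the single terminal vertex $v_1$ shared by all $t$ cycles. Since $x=1$ lies just outside the range $2\le x\le \min\{|V(C_{n_i})|\}-1$ for which the theorem is stated, I would not cite it as a black box but instead re-examine its proof and confirm that every distance identity used there survives the collapse. Concretely, with $x=1$ we have $v_x=v_1$ and $x-1=0$, so the identity $d(u,v_{n_i}^i)=d(u,v_1)+(x-1)+(n_i-x)=d(u,v_1)+n_i-1$ for $u$ lying in another cycle still holds, and the terminal-path identity $d(u,v_{n_i}^i)=d(u,v_x)+n_i-x$ reduces to its single valid instance $d(v_1,v_{n_i}^i)=n_i-1$. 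Thus the geometry driving the theorem is intact.

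For the lower bound I would reproduce the twin argument. Fix any two cycles $i\neq j$ and suppose a set $S$ meets neither $P_{n_i}$ nor $P_{n_j}$. Because $v_{n_i}^i$ and $v_{n_j}^j$ are precisely the in-neighbours of $v_1$ on their respective directed cycles, every shortest path from either of them to a vertex outside its own cycle passes through $v_1$; hence $d(v_{n_i}^i,b)=1+d(v_1,b)=d(v_{n_j}^j,b)$ for each $b\in S$, so $r(v_{n_i}^i|S)=r(v_{n_j}^j|S)$. Consequently a resolving set can avoid $P_{n_i}$ for at most one index $i$, which forces $dim(Amal\{C_{n_i}\}_{i=1}^t)\ge t-1$.

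For the upper bound I would take $B=\{v_{n_1}^1,\ldots,v_{n_{t-1}}^{t-1}\}$ and check, exactly as in the theorem, that all representations are pairwise distinct. The clean separation is that for a vertex $u=v_k^i\in P_{n_i}$ with $1\le i\le t-1$ the $i$-th coordinate is $d(u,v_{n_i}^i)=n_i-k\le n_i-2$, whereas for every vertex $u\notin P_{n_i}$ (that is, $u=v_1$ or $u\in P_{n_j}$ with $j\neq i$) the $i$-th coordinate equals $d(u,v_1)+n_i-1\ge n_i-1$. Hence membership in $P_{n_i}$ is read off coordinate $i$, two vertices of a common $P_{n_i}$ are separated by their distinct $i$-th coordinates, and the leftover vertices $\{v_1\}\cup P_{n_t}$, whose every coordinate has the shifted form $d(\cdot,v_1)+n_i-1$, are separated by the single parameter $d(\cdot,v_1)$, which is injective on a strongly oriented cycle. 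This makes $B$ resolving and gives $dim(Amal\{C_{n_i}\}_{i=1}^t)\le t-1$.

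I expect no serious obstacle: the whole content is that the theorem's proof is insensitive to shrinking the terminal path, plus the boundary bookkeeping demanded by the fact that $x=1$ is formally excluded from the theorem's hypotheses. The only point needing genuine (if brief) care is confirming that the degenerate geometry creates no new coincidence of representations — in particular that $v_1$, now the sole common vertex, is still resolved and does not collide with any vertex of the unmarked cycle $t$, which holds because equality of their representations would force $d(u,v_1)=0$, i.e. $u=v_1$.
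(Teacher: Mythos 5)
Your proposal is correct and takes essentially the same route as the paper, which obtains the corollary simply by observing that a vertex amalgamation is the $x=1$ case of the path amalgamation theorem. Your extra care in checking that the theorem's distance identities and the twin argument survive the collapse of the terminal path to a single vertex (since $x=1$ is formally outside the stated range $2\le x\le\min\{|V(C_{n_i})|\}-1$) is a welcome refinement of the paper's one-line justification, not a different method.
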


\begin{corollary}
Let $Edge-Amal\{C_{n_i}\}_{i=1}^t$ be an edge amalgamation of $t$ strongly oriented cycles then $dim(Edge-Amal\{C_{n_i}\}_{i=1}^t)=t-1.$
\end{corollary}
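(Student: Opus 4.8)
The plan is to first pin down the oriented structure, then prove the two bounds separately. Since every $C_{n_i}$ is strongly oriented and all of them share the terminal path $P_x$, the orientation on $P_x$ is forced to be consistent across the cycles, say $v_1 \to v_2 \to \cdots \to v_x$, and each cycle is then the directed cycle $v_1 \to \cdots \to v_x \to v_{x+1}^i \to \cdots \to v_{n_i}^i \to v_1$. The single observation that drives everything is topological: the only arc leaving the tail $P_{n_i}$ is $v_{n_i}^i \to v_1$, and the only arc leaving $v_1$ is $v_1 \to v_2$. Hence from any end-vertex $v_{n_i}^i$ the first step of every shortest path is forced, and a shortest path to any vertex outside $P_{n_i}$ must run $v_{n_i}^i \to v_1 \to \cdots \to v_x$ before branching into a cycle.

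For the lower bound I would show that a resolving set can avoid the interior of at most one tail. Suppose $S$ contains no vertex of $P_{n_i}$ and none of $P_{n_j}$ for some $i\neq j$. Because the unique out-arc of $v_{n_i}^i$ goes to $v_1$, we have $d(v_{n_i}^i,b)=1+d(v_1,b)$ for every $b\neq v_{n_i}^i$, and symmetrically $d(v_{n_j}^j,b)=1+d(v_1,b)$; since every $b\in S$ differs from both $v_{n_i}^i$ and $v_{n_j}^j$, this yields $r(v_{n_i}^i|S)=r(v_{n_j}^j|S)$, so $S$ is not resolving. Consequently $S$ must meet at least $t-1$ of the pairwise disjoint tails $P_{n_1},\ldots,P_{n_t}$, whence $dim(P_x\text{-}Amal\{C_{n_i}\}_{i=1}^t)\geq t-1$.

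For the upper bound I would test $B=\{v_{n_1}^1,\ldots,v_{n_{t-1}}^{t-1}\}$. The key is to record $d(u,v_{n_i}^i)$ in three cases: if $u=v_\ell^i$ lies on $P_{n_i}$ the distance is the short within-cycle value $n_i-\ell$; if $u$ lies on another tail the path is forced through $v_1$ and the distance is $d(u,v_1)+n_i-1$; and if $u=v_\ell\in P_x$ the distance is $d(u,v_x)+(n_i-x)=n_i-\ell$. The payoff is that the $i$-th coordinate is at most $n_i-x-1$ precisely when $u\in P_{n_i}$ and is at least $n_i-x$ for every other $u$; so the position of the ``short'' coordinate identifies the tail containing $u$, strict monotonicity of $n_i-\ell$ separates vertices within a single covered tail, and the two remaining families—the uncovered tail $P_{n_t}$ (coordinates affine in $d(u,v_1)$) and the terminal path $P_x$ (coordinates $n_j-\ell$)—are each injective and occupy disjoint offset ranges, since $P_x$ yields values below $n_j$ while $P_{n_t}$ yields values at least $n_j$. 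Thus $B$ resolves the graph and $dim\leq t-1$.

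I expect the bookkeeping in the upper bound to be the main obstacle: specifically, confirming that no vertex of the uncovered tail $P_{n_t}$ collides with a vertex of the terminal path $P_x$ or with a covered tail. Once the three distance formulas above are in hand, this reduces to a routine comparison of the achievable coordinate ranges rather than a genuine difficulty; the conceptual content is entirely in the forced-first-step observation, and the corollaries for $x=1$ (vertex amalgamation) and $x=2$ (edge amalgamation) then follow immediately by specialization.
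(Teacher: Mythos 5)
Your proposal is correct and follows essentially the same route as the paper: the identical lower-bound argument (if two tails $P_{n_i}$, $P_{n_j}$ are both missed, then $v_{n_i}^i$ and $v_{n_j}^j$ share the representation $1+d(v_1,\cdot)$), the same resolving set $B=\{v_{n_1}^1,\ldots,v_{n_{t-1}}^{t-1}\}$ with the same three distance formulas, and the same specialization of the path-amalgamation theorem to $x=2$. You merely spell out the coordinate-range comparison that the paper leaves as "this guarantees distinct representations," which is a welcome but not substantively different addition.
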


\end{document}